\numberwithin{equation}{section}
    \theoremstyle{plain}
\newtheorem{theorem}[subsection]{Theorem}
\newtheorem{prototheorem}[subsection]{Prototype Theorem}
\newtheorem{proposition}[subsection]{Proposition}
\newtheorem{conjecture}[subsection]{Conjecture}
\theoremstyle{definition}
\newtheorem{definition}[subsection]{Definition}
\newtheorem{problem}[subsection]{Problem}
\theoremstyle{remark}
\newtheorem{example}{Example}
\renewcommand{\leq}{\leqslant}
\renewcommand{\geq}{\geqslant}
\newsavebox{\proofbox}
\savebox{\proofbox}{\begin{picture}(7,7)  \put(0,0){\framebox(7,7){}}\end{picture}}
\newcommand\Z{\mathbb{Z}}
\newcommand\R{\mathbb{R}}
\newcommand\C{\mathbb{C}}
\newcommand\SL{\operatorname{SL}}
\newcommand\F{\mathbb{F}}
\newcommand\eps{\varepsilon}
\begin{document}

\title{Small doubling in groups}

\author{Emmanuel Breuillard}
\address{Laboratoire de Math\'ematiques\\
B\^atiment 425, Universit\'e Paris Sud 11\\
91405 Orsay\\
FRANCE}
\email{emmanuel.breuillard@math.u-psud.fr}

\author{Ben Green}
\address{Centre for Mathematical Sciences\\
Wilberforce Road\\
Cambridge CB3 0WA\\
England }
\email{b.j.green@dpmms.cam.ac.uk}

\author{Terence Tao}
\address{Department of Mathematics, UCLA\\
405 Hilgard Ave\\
Los Angeles CA 90095\\
USA}
\email{tao@math.ucla.edu}

\subjclass{11B30, 20N99}

\begin{abstract}
Let $A$ be a subset of a group $G = (G,\cdot)$. We will survey the theory of sets $A$ with the property that $|A\cdot A| \leq K|A|$, where $A \cdot A = \{a_1 a_2 : a_1, a_2 \in A\}$. The case $G = (\Z,+)$ is the famous Freiman--Ruzsa theorem.
\end{abstract}

\maketitle

\onehalfspace

\setcounter{tocdepth}{1}	

\tableofcontents

\section{Small doubling in abelian groups}\label{sec1}  Let  $G =(G,+)$ be an abelian group, the group operation being written with the $+$ symbol. If $A \subseteq G$ is a finite set, we may consider the sumset $A+A := \{a_1+a_2: a_1, a_2 \in A \}$.
We have the trivial bounds
\begin{equation}\label{triva}
 |A| \leq |A+A| \leq \min( \textstyle\frac{1}{2}|A|(|A| + 1), |G| )
 \end{equation}
on the cardinality $|A+A|$ of this sumset.  One expects the trivial upper bound to be attained with equality (or near-equality) unless $A$ has some special additive structure. For example, it is certainly attained when $A = \{1,2, 2^2,\dots, 2^{n-1}\}$ consists of powers of two.

Clarifying what exactly is meant by \emph{special additive structure} turns out to be very interesting, and is the main topic of this survey. Specifically, we will be interested in describing as carefully as we can the structure of non-empty finite sets $A$ for which $\sigma[A] := |A + A|/|A|$ is at most $K$, where $K \in \R^+$ is some constant. We say that such a set $A$ has \emph{doubling} at most $K$. If $\sigma[A]$ is ``small'', we informally say that $A$ has small doubling.

Let us begin with some examples of sets with small doubling. The simplest example is that of a finite subgroup, or a subset of one.

\begin{example}\label{ex1}
Suppose that $A$ is a finite subgroup of $G$. Then $|A + A| = |A|$, and so $\sigma[A]= 1$. Similarly if $A$ is a coset of a subgroup of $G$. If $A$ is not a whole subgroup but occupies a non-zero proportion $\delta$ of some finite subgroup $H \leq G$ then $A + A \subseteq H$, and so $\sigma[A] \leq 1/\delta$. \end{example}

It is a nice exercise to prove that the \emph{only} finite non-empty sets with doubling $1$ are cosets of subgroups.  On the other hand it is very easy to come up with an example of a set $A$ with small doubling which is not related to a subgroup.

\begin{example}\label{ex2}
Suppose that $G = \Z$, and let $h_0, u_1$ and $N_1 >0$ be integers.  We define the arithmetic progression\footnote{The notation here may seem odd. The point is that an arithmetic progression is a very special case of a much more general object called a \emph{coset nilprogression}, which we will discuss in detail in what follows using an elaboration of the same notation.}
$$P(u_1; N_1) := \{h_0 + n_1 u_1 : 0 \leq n_1 < N_1\}$$
and take $A$ to be the arithmetic progression
\[ A = h_0 + P(u_1; N_1) = \{h_0 + n_1 u_1 : 0 \leq n_1 < N_1 \}.\]
Then $A + A$ is the arithmetic progression
$$ A+A = 2h_0 + P(u_1; 2N_1-1) = \{2h_0 + n_1u_1 : 0 \leq n_1 < 2N_1 - 1\},$$
and hence $\sigma[A] \leq 2$. If $A$ occupies a proportion $\delta$ of some arithmetic progression then $\sigma[A] \leq 2/\delta$.
\end{example}

There are multidimensional constructions of a similar nature.

\begin{example}\label{ex3}
Suppose that $G = \Z$, let $h_0,u_1,\ldots,u_d$ and $N_1,\ldots,N_d >0$ be integers.  We introduce the $d$-dimensional progression
\begin{align*} P(u_1,\dots, u_d; N_1, \dots N_d) &:=  \{ n_1 u_1 + \dots + n_d u_d : 0 \leq n_i < N_i\}\end{align*}
and then define
\begin{align*} A & :=h_0 + P(u_1,\dots, u_d; N_1, \dots N_d) \\
& =  \{h_0 + n_1 u_1 + \dots + n_d u_d : 0 \leq n_i < N_i \}.\end{align*}
The sumset $A + A$ is then the progression
\begin{align*}
A+A &= 2h_0 + P(u_1,\dots,u_d,2N_1-1,\dots,2N_d-1)\\
&= \{ 2h_0 + n'_1 u_1 + \dots + n'_d u_d : 0 \leq n'_i < 2N_i - 1\}.
\end{align*}
Thus $A+A$ can be covered by $2^d$ translates of $A$, so that $\sigma[A] \leq 2^d$. If $A$ occupies a proportion $\delta$ of some  $d$-dimensional progression then $\sigma[A] \leq 2^d/\delta$.   
\end{example}

Finally, one can combine any of these examples using a direct product construction.

\begin{example}\label{ex4}
Suppose that $A_1 \subseteq G_1$, $A_2 \subseteq G_2$ and that $\sigma[A_i] \leq K_i$ for $i = 1, 2$.  Consider $A_1 \times A_2$ as a subset of $G_1 \times G_2$. Then $\sigma[A_1 \times A_2] \leq K_1 K_2$.
\end{example}

It turns out that, qualitatively at least, the above four examples provide a complete description of sets with small doubling in abelian groups. In the case $G = \Z$ this was established by Freiman \cite{freiman-book} and Ruzsa \cite{ruzsa-sumset}.

\begin{theorem}[Freiman's theorem]\label{thm1.1}  Let $A$ be a finite non-empty set of integers $\Z$ such that $\sigma[A] \leq K$.  Then $A$ is contained within a \emph{generalised arithmetic progression}
\begin{align*} h_0 + P &(u_1,\ldots,u_r;  N_1,\ldots,N_r)\\ &  := \{ h_0 + n_1 u_1 + \ldots + n_r u_r: n_1,\ldots,n_r \in \Z, 0 \leq n_i < N_i \}.\end{align*}
Here, $h_0$ and $u_1,\ldots,u_r \in \Z$ are integers, the \emph{rank} $r$ is\footnote{this notation means a quantity bounded by $C_K$ for some $C_K$ depending only on $K$.} $O_K(1)$ and the \emph{volume} $V := N_1 \dots N_r$ satisfies\footnote{To write $X \ll_K Y$ means that $X \leq C_K Y$ for some $C_K$ depending only on $K$. We might equivalently write $X = O_K(Y)$.} $V \ll_K |A|$.
\end{theorem}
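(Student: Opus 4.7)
The plan is to follow the Ruzsa--Freiman scheme, whose four ingredients are: the Plünnecke--Ruzsa inequalities, the Ruzsa modeling lemma, Bogolyubov's lemma, and geometry of numbers, glued together at the end by the Ruzsa covering lemma.

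First I would prove the Plünnecke--Ruzsa inequalities, which give that $|kA - \ell A| \leq K^{k+\ell} |A|$ for all $k, \ell \geq 0$ whenever $\sigma[A] \leq K$; this is a purely combinatorial statement proven by a clever graph-theoretic argument (or, following Petridis, a short induction). These control all iterated sumsets in terms of $|A|$, and in particular give $|8A - 8A| \ll_K |A|$.

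The next and most important step is Ruzsa's modeling lemma: for a finite $A \subseteq \Z$ of small doubling one can find a large subset $A' \subseteq A$, with $|A'| \geq |A|/2$, and a prime $N \ll_K |A|$ such that $A'$ is Freiman $8$-isomorphic to a subset $B \subseteq \Z/N\Z$. (The construction is by reducing $A$ modulo a random large prime and then intersecting with an interval on which addition does not wrap around for $8$-fold sumsets.) The point is that $B$ now lives in a finite group of size comparable to $|A|$, where Fourier analysis becomes available. Apply Bogolyubov's lemma, which says that in $\Z/N\Z$ the set $2B - 2B$ contains the Bohr set $\mathcal{B}(\Gamma, 1/4) := \{ x \in \Z/N\Z : |e^{2\pi i \xi x / N} - 1| \leq 1/4 \text{ for all } \xi \in \Gamma\}$ for some frequency set $\Gamma$ of size $O_K(1)$; the proof is a one-line Parseval / level-set argument.

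The third step invokes the geometry of numbers: Minkowski's second theorem applied to the lattice of $\xi \in \Z^{\Gamma}$ that annihilate the characters in $\Gamma$ shows that the Bohr set $\mathcal{B}(\Gamma, 1/4)$ contains a proper generalised arithmetic progression $Q \subseteq \Z/N\Z$ of rank $r = |\Gamma| = O_K(1)$ and volume $|Q| \gg_K N$. Transporting $Q$ back through the Freiman $8$-isomorphism (which is well-behaved because $Q \subseteq 2B - 2B \subseteq 8B - 8B$) yields a genuine generalised arithmetic progression $P \subseteq \Z$ of rank $O_K(1)$ and volume $\gg_K |A|$ with $P \subseteq 2A' - 2A' \subseteq 2A - 2A$.

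Finally, since $|A + P| \leq |3A - 2A| \ll_K |A| \ll_K |P|$, Ruzsa's covering lemma gives a set $X \subseteq A$ of size $O_K(1)$ such that $A \subseteq X + P - P$. Absorbing $X$ and $-P$ into the progression enlarges the rank and volume only by factors $O_K(1)$, producing the desired generalised arithmetic progression containing $A$ of rank $O_K(1)$ and volume $\ll_K |A|$. The step I expect to be the most delicate is the modeling lemma, because one has to find a prime $N$ of the right size for which the projection $A \to \Z/N\Z$ is injective on enough of the eightfold sumset; the geometry-of-numbers step is also the one that forces the rank $r$ to depend on $K$ (through $|\Gamma|$) rather than being absolute, and getting the volume bound $V \ll_K |A|$ rather than something weaker requires being careful that $Q$ fills a positive proportion of $\Z/N\Z$.
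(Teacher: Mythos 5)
The paper states this theorem without proof, citing Freiman \cite{freiman-book} and Ruzsa \cite{ruzsa-sumset}, so there is no in-paper argument to compare against. Your outline is a correct sketch of Ruzsa's proof from \cite{ruzsa-sumset} (see also \cite[Ch.~5]{tv-book} or \cite{green-edinburgh}), and the five ingredients you name, in the order you name them, are exactly how that argument goes. A few small inaccuracies are worth flagging. First, the modeling lemma for Freiman $8$-isomorphisms produces a subset $A'$ with $|A'| \geq |A|/8$ rather than $|A|/2$; this is still $\gg |A|$, so it is harmless. Second, the lattice to which Minkowski's second theorem is applied is not ``the lattice of $\xi\in\Z^{\Gamma}$ annihilating the characters'' but rather the pullback of $\Z^{r}$ under the map $x\mapsto (x\xi_1/N,\ldots,x\xi_r/N)$ from $\Z$ into $\R^{r}$ (equivalently, the lattice $N\Z^{r}+\Z\cdot(\xi_1,\ldots,\xi_r)$), so that short vectors in the dual picture correspond to large proper progressions inside the Bohr set. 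Finally, you should justify why the proper progression $Q\subseteq 2B-2B$ pulls back to a proper progression in $2A'-2A'$: the point is that a Freiman $8$-isomorphism on $A'$ induces a Freiman $2$-isomorphism between $2A'-2A'$ and $2B-2B$, and Freiman $2$-isomorphisms carry proper generalised arithmetic progressions to proper generalised arithmetic progressions of the same rank and cardinality. This is precisely the reason the modeling step is done at order $8$ rather than order $2$.
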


Note that $\Z$ does not have any interesting subgroups, so only Examples \ref{ex2} and \ref{ex3} are relevant here. At the other (high-torsion) extreme, Ruzsa \cite{ruzsa-finite-field} gave a very short and elegant proof of the following statement. Here, $\F_2^\omega$ is the direct product of countably many copies of the finite field $\F_2$.

\begin{theorem}[Ruzsa]\label{thm1.2}
Let $A$ be a finite non-empty subset of $\F_2^{\omega}$, and suppose that $\sigma[A] \leq K$. Then there exists a subgroup $H$ containing $A$ such that $|H| \ll_K |A|$.
\end{theorem}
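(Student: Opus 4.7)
The plan is to take $H:=\langle A\rangle$, the subgroup of $\F_2^\omega$ generated by $A$, and show $|\langle A\rangle|\ll_K|A|$. The main tools are the Pl\"unnecke--Ruzsa inequality and Ruzsa's covering lemma, both of which hold in any abelian group.

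First I would apply the Pl\"unnecke--Ruzsa inequality: from $|A+A|\leq K|A|$ one obtains $|nA-mA|\leq K^{n+m}|A|$ for all $n,m\geq 0$. In $\F_2^\omega$, $-x=x$, so $A-A=A+A$ and iterated sumsets $nA$ are symmetric; the bound becomes $|(n+m)A|\leq K^{n+m}|A|$, and in particular $|4A|\leq K^4|A|$. After translating so that $0\in A$ (which preserves the doubling), the chain $A\subseteq 2A\subseteq 3A\subseteq\cdots$ is monotone increasing with union $\langle A\rangle$.

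Next I would apply Ruzsa's covering lemma to the pair $(A,A)$: since $|A+A|/|A|\leq K$, there exists $X\subseteq A$ with $|X|\leq K$ such that $A\subseteq X+(A-A)=X+(A+A)$. Let $H_0:=\langle X\rangle$ be the $\F_2$-linear span of $X$ in $\F_2^\omega$; since $\F_2^\omega$ is an $\F_2$-vector space and $|X|\leq K$, we have $|H_0|\leq 2^K$.

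The decisive claim is that $\langle A\rangle\subseteq H_0+mA$ for some integer $m=m(K)$ depending only on $K$. Granted this, the Pl\"unnecke--Ruzsa bound yields
\[|\langle A\rangle|\;\leq\;|H_0|\cdot|mA|\;\leq\;2^K\cdot K^m\cdot|A|\;=\;O_K(|A|),\]
and we may take $H:=\langle A\rangle$. To establish the claim I would pass to the quotient $\F_2^\omega/H_0$: the covering relation $A\subseteq X+2A$ descends to $\bar A\subseteq 2\bar A$ modulo $H_0$, so the monotone chain $\bar A\subseteq 2\bar A\subseteq 3\bar A\subseteq\cdots$ in $\F_2^\omega/H_0$ stabilises at the subgroup $\overline{\langle A\rangle}$. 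The main obstacle is to quantify the rate of stabilisation: naive iteration of $A\subseteq X+2A$ expands the $2A$-component geometrically, so one must combine the monotone chain with Pl\"unnecke--Ruzsa estimates on the sizes $|n\bar A|$ at each stage to argue that the strict inclusion $n\bar A\subsetneq(n+1)\bar A$ can persist for only $O_K(1)$ steps before the chain saturates at $\overline{\langle A\rangle}$, yielding the required bounded $m=m(K)$.
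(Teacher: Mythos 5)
Your overall framework --- Pl\"unnecke--Ruzsa, then Ruzsa's covering lemma, then the observation that $\langle X\rangle$ is small because the ambient group has exponent $2$ --- is exactly the right one, and it is what Ruzsa's argument does. But you have applied the covering lemma to the wrong pair of sets, and this is a real gap which your proposed fix does not close.

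You cover $(A,A)$, obtaining $X\subseteq A$ with $|X|\leq K$ and $A\subseteq X+2A$. This relation carries almost no information: after translating so that $0\in A$, the inclusion $A\subseteq 2A$ already holds, and modulo $H_0=\langle X\rangle$ the covering descends to $\bar A\subseteq 2\bar A$, which is automatic. No iteration can extract a bound from a vacuous inclusion. You correctly observe that the naive iteration $nA\subseteq X+(n+1)A$ runs the wrong way, but the proposed repair --- bounding the stabilisation time of the chain $n\bar A$ via Pl\"unnecke--Ruzsa --- cannot work: the Pl\"unnecke bound $|n\bar A|\leq K^n|A|$ grows without bound in $n$, and each strict inclusion $n\bar A\subsetneq(n+1)\bar A$ is only guaranteed to add a single element, so nothing forces the chain to saturate after $O_K(1)$ steps. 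Bounding that stabilisation time is precisely what the theorem asserts; it cannot be assumed.

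The fix, which is the crux of Ruzsa's proof, is to apply the covering lemma one iterate further out, to the pair $(3A,A)$ (equivalently $(2A-A,A)$, since $-A=A$ here). From $|3A+A|=|4A|\leq K^4|A|$, the covering lemma gives $X\subseteq 3A$ with $|X|\leq K^4$ and $3A\subseteq X+(A-A)=X+2A$. This covering \emph{does} close the induction: assuming $nA\subseteq\langle X\rangle+2A$ (true for $n=2,3$ after translating so $0\in A$), one gets
\[
(n+1)A \;=\; A+nA \;\subseteq\; \langle X\rangle + 3A \;\subseteq\; \langle X\rangle + X + 2A \;=\; \langle X\rangle + 2A.
\]
Hence $\langle A\rangle=\bigcup_{n}nA\subseteq\langle X\rangle+2A$, and since $X\subseteq 3A\subseteq\langle A\rangle$ one may take $H=\langle A\rangle$, with $|H|\leq|\langle X\rangle|\cdot|2A|\leq 2^{K^4}\cdot K\,|A|$. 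The essential point your version misses is that one must cover $3A$ (not $A$) by translates of $2A$, so that the extra copy of $A$ appearing in each inductive step can be absorbed back into $2A$ at the cost of one more element of $X$.
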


Ruzsa's theorem works in $\F_p^{\omega}$ for an arbitrary prime $p$, although the dependence of the $\ll_K$ constant is not uniform\footnote{The optimal value of this constant was worked out recently in \cite{lovett}.} in $p$. Ruzsa and the second author \cite{green-ruzsa} combined these two results to get a result valid for all abelian groups.

\begin{theorem}[Green--Ruzsa]\label{ftag}  Let $A$ be a finite non-empty subset of an additive group $G$ such that $\sigma[A] \leq K$.  Then there exists a \emph{coset progression} $H+P$, where $H$ is a finite subgroup of $G$ and $P = P(u_1,\ldots,u_r;$ $N_1,\ldots,N_r)$ is a generalised arithmetic progression of rank $O_K(1)$, such that $A \subseteq H+P$ and $|H| N_1 \ldots N_r \ll_K |A|$.
\end{theorem}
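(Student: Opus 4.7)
The plan is to bootstrap off the two extremal cases already proved---Freiman's theorem in $\Z$ (Theorem \ref{thm1.1}) and Ruzsa's theorem in $\F_p^\omega$ (Theorem \ref{thm1.2})---by combining them via three standard tools: the Pl\"unnecke--Ruzsa sumset inequalities, a Bogolyubov--Ruzsa style lemma valid in an arbitrary abelian group, and the Ruzsa covering lemma. As a preliminary reduction, Pl\"unnecke--Ruzsa yields $|mA - nA| \leq K^{m+n}|A|$ for all $m,n \geq 0$, which controls every iterated sumset appearing below; one may also replace $G$ by the (countable) subgroup generated by $A - A$ without loss of generality.

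The heart of the argument is to show that $2A - 2A$ contains a coset progression $H + P$ with rank $r = O_K(1)$ and volume $|H| N_1 \cdots N_r \gg_K |A|$. I would do this by Fourier analysis in a suitable finite abelian group: embed $A$ via a Freiman $8$-isomorphism into a finite abelian quotient chosen so that the torsion-free part of $\langle A \rangle$ is resolved at scale much larger than $|A|$, identify the large Fourier spectrum of the indicator $1_A$, and observe that the Bohr set associated to this spectrum is contained in $2A - 2A$. A geometry of numbers argument on the frequency lattice (Minkowski's second theorem) extracts a generalised arithmetic progression of rank $O_K(1)$ from this Bohr set, while the torsion part of the ambient group produces the finite subgroup $H$.

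With $H + P \subseteq 2A - 2A$ of size $\gg_K |A|$ in hand, the Ruzsa covering lemma applied to $A$ and $H + P$ (using that $A + (H+P) \subseteq 3A - 2A$ has size $\leq K^5 |A|$) yields a set $F \subseteq G$ with $|F| = O_K(1)$ such that
\[ A \subseteq F + (H+P) - (H+P) = F + H + (P - P). \]
The difference $P - P$ lies in a progression of rank $r$ and volume at most $2^r N_1 \cdots N_r$, and the finite translation set $F$ can be absorbed by adjoining at most $|F|$ new generators of short side length. This produces a coset progression of rank $O_K(1)$ and volume $\ll_K |A|$ containing $A$, as required.

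The main technical obstacle is the construction of the coset progression inside $2A - 2A$ with the correct quantitative parameters. Running Fourier analysis uniformly over all abelian groups is delicate: the ambient group may simultaneously carry a large torsion component (which will supply $H$) and a large free component (which will supply $P$), and reconciling the two so that a single rank bound $r = O_K(1)$ holds requires a careful coordinated choice of the Freiman-isomorphic embedding and of the spectrum threshold. This hybrid Bogolyubov--Ruzsa step is where the proof diverges most significantly from the single-group cases of Theorems \ref{thm1.1} and \ref{thm1.2}, each of which only has to deal with one of the two features in isolation.
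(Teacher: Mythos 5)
This is a survey paper, and Theorem~\ref{ftag} is stated with a citation to \cite{green-ruzsa} but without proof, so there is no in-paper argument to compare against. Judged against the published Green--Ruzsa argument, your sketch is essentially correct and faithfully reproduces the standard proof architecture: Pl\"unnecke--Ruzsa to control $mA - nA$; a Freiman modelling step into a finite abelian group of size $O_K(|A|)$; a Bogolyubov--Ruzsa Fourier argument showing that $2A-2A$ contains a Bohr set of bounded rank and radius $\gg_K 1$; a geometry-of-numbers (Minkowski second theorem) extraction of a coset progression $H+P$ from that Bohr set; and finally the Ruzsa covering lemma plus absorption of the $O_K(1)$ translates into extra progression dimensions. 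You have also correctly identified the genuine novelty over Theorems~\ref{thm1.1} and \ref{thm1.2}, namely that the modelling and the Bohr-set-to-progression step must handle a group with both a substantial torsion part (supplying $H$) and a substantial free part (supplying $P$) simultaneously. One small imprecision of phrasing: the modelling step does not embed $A$ into a \emph{quotient} of $G$; rather, it produces a Freiman $s$-isomorphism of $A$ onto a subset of a freshly constructed finite abelian group whose size is controlled in terms of $K$ and $|A|$, with the torsion-free directions compressed to cyclic groups of order a few times $|A|$ so as to avoid spurious wraparound relations. With that terminological caveat, the proposal is a correct high-level account of the proof.
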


These theorems completely resolve the qualitative question of describing the structure of sets $A$ whose doubling constant $\sigma[A]$ is at most $K$. There are many very interesting quantitative issues in connection with this question, and we will address these in \S \ref{sec5}.

Let us give a brief selection of other results connected with small doubling in abelian groups.

The first does not concern finite sets (although there are variants of it that do, such as Propositions \ref{prop5.3} and \ref{prop5.4}). If $A \subseteq \R^d$ is open then we define its doubling constant to be $\sigma[A] := \mu(A + A)/\mu(A)$, where $\mu$ is Lebesgue measure.

\begin{proposition}\label{bm}
Suppose that $A$ is an open subset of $\R^d$, and that $\sigma[A] \leq K$.  Then $d \leq \log_2 K$.
\end{proposition}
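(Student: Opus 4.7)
The plan is to deduce this directly from the Brunn--Minkowski inequality. Recall that Brunn--Minkowski states that for measurable sets $X, Y \subseteq \R^d$ of finite positive measure,
\[ \mu(X+Y)^{1/d} \geq \mu(X)^{1/d} + \mu(Y)^{1/d}. \]
Since $A$ is open (hence measurable) and $A+A$ is also open (hence measurable), I can apply this with $X=Y=A$. This yields $\mu(A+A)^{1/d} \geq 2\mu(A)^{1/d}$, i.e.\ $\sigma[A] \geq 2^d$. Combined with the hypothesis $\sigma[A] \leq K$ this gives $2^d \leq K$, so $d \leq \log_2 K$, as desired.

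Before invoking Brunn--Minkowski I should dispense with measure-theoretic degenerate cases. The set $A$, being open and non-empty (otherwise $\sigma[A]$ is not even defined), has strictly positive Lebesgue measure, so the denominator in $\sigma[A]$ is non-zero. If $\mu(A) = +\infty$, then since $A \subseteq A+A$ one has $\mu(A+A) = +\infty$ as well and $\sigma[A]$ is an indeterminate form; in this case I would simply replace $A$ by a bounded open subset $A' \subseteq A$ of positive finite measure after noting that the proposition only needs to be established for $A$ of finite positive measure (the statement can be interpreted as: if $\sigma[A]$ is defined and bounded by $K$, then $d \leq \log_2 K$).

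The only real content here is Brunn--Minkowski itself, which is classical and which I would quote rather than reprove. If I did want to reprove it, the cleanest route is the induction on dimension via slicing, combined with the one-dimensional case (which reduces to the trivial fact that $\mu(X+Y) \geq \mu(X) + \mu(Y)$ for measurable sets on the line). Thus there is no real obstacle: the entire proposition is a two-line consequence of Brunn--Minkowski applied with $X=Y=A$, and all the work is hidden in citing that inequality.
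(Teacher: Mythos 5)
Your proof is correct, but it reaches for a heavier tool than the paper actually uses. The paper's primary argument is a genuine one-liner: $A+A$ contains the dilate $2\cdot A = \{2a : a \in A\}$ (since $a+a \in A+A$ for every $a$), and $\mu(2\cdot A) = 2^d\,\mu(A)$ by the scaling behaviour of Lebesgue measure, so $\sigma[A]\geq 2^d$ immediately. This dispenses with the result using nothing beyond the definition of Lebesgue measure and requires no convexity-style input at all. Your route via Brunn--Minkowski, which the paper explicitly offers as an alternative (``the claim also follows from\ldots''), is of course valid: with $X=Y=A$ open, $A+A$ is open hence measurable, and $\mu(A+A)^{1/d} \geq 2\mu(A)^{1/d}$ gives the same conclusion. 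The trade-off is that Brunn--Minkowski is a substantially deeper theorem and delivers more than is needed here (it bounds $\mu(X+Y)$ for distinct $X,Y$, whereas the dilation trick exploits $X=Y$), while the dilation argument is self-contained and makes transparent exactly why $2^d$ is the threshold. Your discussion of measurability and the infinite-measure degenerate case is careful and sensible, though in the elementary argument those concerns essentially evaporate since $2\cdot A$ is manifestly open with measure $2^d\mu(A)$.
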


\begin{proof}
This is essentially trivial: $A + A$ contains the dilate $2 \cdot A$, which has $2^d$ times the volume of $A$.  The claim also follows from
the more general \emph{Brunn-Minkowski inequality} $\mu(A+B)^{1/d} \geq \mu(A)^{1/d} + \mu(B)^{1/d}$ (see e.g. \cite{gardner-brunn-minkowski-survey}).
\end{proof}

There are still further results connected with \emph{very} small doubling, when $K < 2$, or with moderately small doubling (when $2 \leq K \leq 3$ say).  Let us finish this section by giving a very small and incomplete selection of them. Perhaps the most famous is the Cauchy-Davenport-Chowla theorem \cite{cauchy,davenport}.

\begin{theorem}
Suppose that $p$ is a prime and that $A \subseteq \Z/p\Z$. Suppose that $\sigma[A] < 2$. Then $A + A$ is the whole of $\Z/p\Z$ \emph{(}and in particular $|A| > p/2$\emph{)}.
\end{theorem}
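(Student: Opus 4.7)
The plan is to derive this as a corollary of the sharper Cauchy--Davenport inequality
\[ |A + B| \geq \min(p,\; |A| + |B| - 1) \]
for any non-empty $A, B \subseteq \Z/p\Z$, and then specialize to $B = A$: if $\sigma[A] < 2$ holds strictly enough that $|A+A| < 2|A| - 1$ is ruled out, the minimum on the right must be $p$, forcing $A+A = \Z/p\Z$ and hence $2|A| - 1 \geq p$, i.e.\ $|A| > p/2$. The main work is proving the inequality.

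The tool I would use is the \emph{Dyson $e$-transform}. For any $e \in \Z/p\Z$, set
\[ A_e := A \cup (B + e), \qquad B_e := B \cap (A - e). \]
Two elementary facts to verify first: $A_e + B_e \subseteq A + B$ (since $a \in A$ gives $a + b \in A+B$ directly, while $b+e \in A$ gives $(b+e) + b' = b + (b'+e) \in A+B$ for $b' \in B \cap (A-e)$), and $|A_e| + |B_e| = |A| + |B|$ by inclusion/exclusion applied to the two pairs $(A, B+e)$ and $(A-e, B)$.

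With the $e$-transform in hand, I would prove Cauchy--Davenport by induction on $|B|$. The base case $|B| = 1$ is immediate since $|A + B| = |A|$. For the inductive step, assume $|B| \geq 2$ and $A + B \neq \Z/p\Z$ (otherwise there is nothing to prove). The key claim is that we can choose $e$ with $1 \leq |B_e| < |B|$, which is equivalent to finding $e$ such that $B + e$ meets both $A$ and its complement. Once this is done, $(A_e, B_e)$ has strictly smaller second coordinate and the same total size, so induction gives $|A + B| \geq |A_e + B_e| \geq |A_e| + |B_e| - 1 = |A| + |B| - 1$.

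The main obstacle is establishing that such an $e$ exists, and this is where the primality of $p$ enters. Suppose for contradiction that for every $e$, the shift $B + e$ is either entirely inside $A$ or entirely outside $A$. Picking distinct $b_1, b_2 \in B$ and any $a \in A$, choose $e = a - b_1$ so $a = b_1 + e \in A$; by hypothesis $B + e \subseteq A$, in particular $b_2 + e = a + (b_2 - b_1) \in A$. Iterating, $a + k(b_2 - b_1) \in A$ for every integer $k$; since $p$ is prime and $b_2 - b_1 \neq 0$, the element $b_2 - b_1$ generates $\Z/p\Z$, so $A = \Z/p\Z$, contradicting $A + B \neq \Z/p\Z$. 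This closes the induction and completes the proof of Cauchy--Davenport, from which the stated theorem follows.
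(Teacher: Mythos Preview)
The paper does not actually give a proof; it only refers the reader to \cite[Theorem 5.4]{tv-book}. Your proof of the Cauchy--Davenport inequality via the Dyson $e$-transform is correct and is essentially the argument found in that reference, so the main body of your proposal is fine.

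The genuine gap is in your first paragraph, and it cannot be fixed: the statement as written in the paper is simply false. Take $p=5$ and $A=\{0,1\}$; then $A+A=\{0,1,2\}$, so $\sigma[A]=3/2<2$, yet $A+A\neq\Z/5\Z$. More generally, any arithmetic progression of length $k<(p+1)/2$ satisfies $\sigma[A]=2-1/k<2$ while $|A+A|=2k-1<p$. What Cauchy--Davenport actually gives is the dichotomy: either $A+A=\Z/p\Z$, or $|A+A|\geq 2|A|-1$ (equivalently $\sigma[A]\geq 2-1/|A|$). Your sentence ``if $\sigma[A]<2$ holds strictly enough that $|A+A|<2|A|-1$ is ruled out'' seems to sense this difficulty without resolving it: the hypothesis $\sigma[A]<2$ yields only $|A+A|\leq 2|A|-1$, which is perfectly compatible with the second branch of the dichotomy and does not force $A+A=\Z/p\Z$. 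The corrected hypothesis is $|A+A|<2|A|-1$ (i.e.\ $\sigma[A]<2-1/|A|$); under that assumption the minimum in Cauchy--Davenport must indeed be $p$, and the rest of your deduction goes through.
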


See e.g. \cite[Theorem 5.4]{tv-book} for a proof.  Kneser's theorem \cite{kneser} generalises the above inequality to arbitrary abelian groups. A consequence of it is the following.

\begin{theorem}
Suppose that $G$ is an arbitrary abelian group and that $A \subseteq G$. Suppose that $\sigma[A] \leq K$ where $K < 2$. Then $A + A$ is a union of cosets of a subgroup $H \leq G$ of size at least $(2 - K)|A|$.
\end{theorem}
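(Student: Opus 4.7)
The plan is to deduce this directly from Kneser's theorem, which in its standard form asserts that for finite non-empty subsets $A,B$ of an abelian group $G$, if $H$ denotes the stabilizer $H = \{g \in G : g + (A+B) = A+B\}$, then
\[ |A+B| \geq |A+H| + |B+H| - |H|. \]
In particular, $A+B$ is by definition a union of cosets of $H$.

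I would specialise this inequality to the case $B = A$. Letting $H$ be the stabilizer of $A+A$, Kneser's theorem gives
\[ |A+A| \geq 2|A+H| - |H|. \]
Since $0 \in H$ (the stabilizer always contains the identity, at least after translating so that $0 \in A+A$, or simply because $H$ is a subgroup), we have $A \subseteq A+H$, so $|A+H| \geq |A|$. Combining this with the doubling hypothesis $|A+A| \leq K|A|$ yields
\[ K|A| \geq |A+A| \geq 2|A+H| - |H| \geq 2|A| - |H|, \]
whence $|H| \geq (2-K)|A|$, as required. Since $A+A$ is a union of cosets of $H$ by definition of the stabilizer, this completes the argument.

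There is essentially no obstacle here beyond invoking Kneser's theorem as a black box; the inequality $|A+H| \geq |A|$ is trivial, and the rest is arithmetic. The one minor subtlety worth mentioning is that the hypothesis $K < 2$ is what guarantees the conclusion is non-vacuous, i.e.\ that $H$ is non-trivial (of cardinality strictly greater than a positive multiple of $|A|$, and in particular larger than $\{0\}$ as soon as $(2-K)|A| > 1$); without this hypothesis the statement would only recover the trivial fact that $A+A$ contains the coset $\{0\}+A+A$ of the trivial subgroup.
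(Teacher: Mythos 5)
Your proof is correct and takes exactly the route the paper intends: the paper states this result immediately after Kneser's theorem with the remark that it is ``a consequence of it,'' and your deduction via the stabilizer $H$ of $A+A$, the inequality $|A+H|\geq|A|$, and the arithmetic $K|A|\geq 2|A|-|H|$ is the standard (and intended) way to extract the statement. The side remark about non-vacuousness when $(2-K)|A|\leq 1$ is a sensible observation and does not affect the argument.
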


Finally let us mention a result \cite{freiman-book} known as \emph{Freiman's $3k - 3$ theorem}, concerning sets of integers with doubling at most (roughly) $3$. See \cite{lev-smeliansky} for a simpler proof and a generalisation to pairs of sets. It gives a very precise version of Theorem \ref{thm1.1} in this regime.

\begin{theorem}
Suppose that $A \subseteq \Z$ is a finite set with $|A| \geq 3$ and with doubling constant $K = \sigma[A]$. Suppose that $K \leq 3 - \frac{3}{|A|}$. Then $A$ is contained in an arithmetic progression $P$ of length at most $(K-1)|A| + 1$.
\end{theorem}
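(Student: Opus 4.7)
The plan is to reduce $A$ to a normalized form and then prove the sharper sumset bound $|A+A| \geq n + k$, from which the theorem follows immediately.

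After translating so that $\min A = 0$ and dividing out by $\gcd(A - A)$, we may assume $A \subseteq \{0, 1, \ldots, n\}$ with $0, n \in A$ and the differences of $A$ generating $\Z$, so that $\{0, \ldots, n\}$ is the shortest AP containing $A$. Writing $k = |A|$, the conclusion is equivalent to $n + 1 \leq |A+A| - k + 1$, i.e.\ $|A + A| \geq n + k$. The inclusions $A, n + A \subseteq A + A$ together with $A \cap (n+A) = \{n\}$ give the easy bound $|A+A| \geq 2k - 1$ (Freiman's ``$2k-1$ theorem''), so it remains to locate $n + 1 - k$ further elements of $A+A$ outside $A \cup (n+A)$, one for each ``hole'' in $H := \{0, \ldots, n\} \setminus A$.

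I would proceed by induction on $k$. The base $k = 3$ follows by direct inspection: the hypothesis $|A+A| \leq 6$ together with $\gcd(A-A) = 1$ forces $A$ to be an AP of common difference $1$. For the inductive step, one shows that removing at least one of the two extreme elements $0$ or $n$ decreases $|A+A|$ by at least $2$; the reduced set $A'$ of size $k-1$ then satisfies $|A' + A'| \leq 3(k-1) - 3$, so by induction $A'$ sits in an AP of length $\leq |A'+A'| - (k-1) + 1 \leq |A+A| - k$. Reinserting the removed endpoint extends this AP by at most one term, giving the desired bound of length $|A+A| - k + 1$.

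The main obstacle is the endpoint-removal step: proving that the sumset genuinely drops by at least $2$ upon removing $0$ or $n$. The only sums lost upon removing $0$ are those of the form $a_j$ that cannot be written as $a_i + a_{i'}$ with $a_i, a_{i'} > 0$, and analogously at $n$; showing that at least one endpoint yields $\geq 2$ such ``uniquely represented'' elements is a delicate case analysis. It is here that the precise hypothesis $K \leq 3 - \frac{3}{k}$ is used: it prevents $A$ from splitting into two far-separated clumps whose cross-sums would absorb the extreme contributions, a scenario that becomes genuinely possible once $K$ exceeds $3$. I would expect the cleanest execution to follow Lev--Smeliansky \cite{lev-smeliansky}, whose two-set formulation replaces this case analysis with a compression argument, inducting on $|A| + |B|$ in the more symmetric statement.
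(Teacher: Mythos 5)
The paper states this result without proof, attributing it to Freiman \cite{freiman-book} and pointing to Lev--Smeliansky \cite{lev-smeliansky} for a simpler argument, so there is no internal proof to compare against; your sketch must be assessed on its own terms, and it has several genuine gaps.

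Your base case is already false as stated: $A = \{0,1,4\}$ has $k = 3$, $\gcd(A-A) = 1$ and $|A+A| = 6 = 3k - 3$, yet it is not a three-term progression --- the shortest progression containing it has $5$ terms, which exceeds $|A+A| - k + 1 = 4$. (This also shows the survey's hypothesis $K \le 3 - 3/|A|$ is an off-by-one slip if ``length'' counts terms; the clean hypothesis for the stated conclusion is $|A+A| \le 3|A| - 4$, and at $|A+A| = 3|A| - 3$ one only gets a weaker, bi-progression conclusion.) The inductive numerology is likewise off by one: from $|A+A| \le 3k - 3$ together with a sumset drop of at least $2$ you can only conclude $|A'+A'| \le 3k - 5$, which is strictly larger than the required $3(k-1) - 3 = 3k - 6$, so the induction hypothesis does not apply to $A'$; you would need a drop of at least $3$ at some endpoint. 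Your identification of the main obstacle is also misplaced: the drop of $\ge 2$ you flag as delicate is in fact automatic, since removing $0$ always kills both $0$ and $a_1 = \min(A\setminus\{0\})$, neither of which is a sum of two positive elements of $A$; the hard part is forcing a drop of $3$ while simultaneously controlling the geometry at the other end. Finally, ``reinserting the removed endpoint extends the AP by at most one term'' fails in general: deleting an endpoint can change $\gcd(A'-A')$ (e.g.\ $A = \{0,2,4,5\}$ upon deleting $5$) or leave a gap (e.g.\ $A = \{0,1,2,5\}$ upon deleting $5$), and in either case the ambient progression for $A$ is far more than one term longer than the one for $A'$, so the reinsertion step destroys the bound.

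The proofs in the literature do not induct on $|A|$ via endpoint removal. After the same normalisation, Freiman's argument and the Lev--Smeliansky streamlining project $A$ modulo $n = \max A$ into $\Z/n\Z$, where the image $\bar A$ has $k-1$ elements and generates the group; a Cauchy--Davenport/Kneser-type lower bound on $|\bar A + \bar A|$ in $\Z/n\Z$, lifted back using the fact that each residue class has at most two representatives among the sums in $\{0,\ldots,2n\}$, yields the dichotomy $|A+A| \ge \min(n+k,\,3k-3)$ in one stroke, after which the hypothesis rules out the second alternative and gives $n \le |A+A| - k$. If you want an induction in the spirit of your sketch, it should run along the Kneser stabiliser or via compressions, not by deleting extreme points one at a time.
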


Results by Stanchescu \cite{stan1,stan2} make various assertions, more precise than Theorem \ref{thm1.1}, for values of $K$ in the range $3 \leq K < 4$.

Finally we remark that in many of the above theorems the hypothesis $\sigma[A] \leq K$ may be varied to other, related, conditions such as $|A - A| \leq K|A|$ or $|A + B| \leq K|A|^{1/2}|B|^{1/2}$ using standard additive combinatorial lemmas; see \cite[Chapter 2]{tv-book}.  There are also variants when one replaces the full sumset $A+A$ by a partial sumset $A \stackrel{G}{+} A := \{ a+b: (a,b) \in G \}$ for some (dense) subset $G$ of $A \times A$, using what is now known as the \emph{Balog-Szemer\'edi-Gowers lemma}.  Again, see \cite[Chapter 2]{tv-book} for details and further references, and \S \ref{sec4} for further comments.

\section{Small doubling in arbitrary groups -- examples}

We now turn to the main focus of this survey, which is to study inverse sumset theorems in the \emph{noncommutative} setting, in which one works with finite nonempty subsets $A$ of an arbitrary group $G$. To emphasise the fact that $G$ is not necessarily abelian, we write the group operation multiplicatively.

We are now interested in the structure of finite sets $A \subseteq G$ with the property that $\sigma[A] := |A \cdot A|/|A|$ is at most $K$, where $A \cdot A := \{a_1a_2 : a_1, a_2 \in A\}$.
The trivial bounds are now $|A| \leq |A \cdot A| \leq \min(|A|^2,|G|)$. Equality can occur in the upper bound, for example if $A = \{ x y^i : i = 0,1\dots, n-1\}$ where $x,y$ are generators of a non-abelian free group.  As in the first section, we begin with some examples. The first few of these are parallel to the abelian examples we discussed before.

\begin{example}
Suppose that $A$ is a subgroup of $G$. Then $|A \cdot A| = |A|$, and $\sigma[A] = 1$. Similarly if $A = Hx$ is a coset of some subgroup $H \leq G$ where $x$ lies in the normaliser of $H$ (that is to say $xH = Hx$). If $A$ is not a whole subgroup but occupies a proportion $\delta$ of some finite subgroup $H \leq G$ then $A \cdot A \subseteq H$, so $\sigma[A] \leq 1/\delta$.
\end{example}

It is a nice exercise to prove the converse to this, namely that the only sets with doubling 1 are cosets $Hx$, where $H$ is a subgroup and $x$ normalises $H$.

\begin{example}
The nonabelian analogue of an arithmetic progression is a geometric progression $P(u_1; N_1) := \{u_1^{n_1} : 0 \leq n_1 < N_1\}$. Assuming all $N_1$ elements are distinct, we have $A \cdot A = \{u_1^{n_1'} : 0 \leq n'_1 < 2N_1\}$, and so $\sigma[A] \leq 2$. If $A$ occupies a proportion $\delta$ of some geometric progression then $\sigma[A] \leq 2/\delta$.
\end{example}

As in the abelian case, there are multidimensional constructions of a similar nature, but one must be a little careful in the absence of commutativity.

\begin{example}\label{ex7}
Let $A$ be a set of the form
$$P(u_1,\dots, u_d; N_1, \dots, N_d) := \{u_1^{n_1} u_2^{n_2} \dots u_d^{n_d} : 0 \leq n_i < N_i\},$$
 where the $u_i$ commute and $N_1,\ldots,N_d>0$ are integers. We call this a $d$-dimensional progression. Then $A \cdot A$ is equal to
$$P(u_1,\dots,u_d;2N_1-1,\dots,2N_d-1) = \{ u_1^{n'_1} u_2^{n'_2} \dots u_d^{n'_d} : 0 \leq n'_i < 2N_i - 1\}.$$
Thus, $A \cdot A$ can be covered by $2^d$ dilates of $A$, so that $\sigma[A] \leq 2^d$. If $A$ occupies a proportion $\delta$ of some (proper) $d$-dimensional progression then $\sigma[A] \leq 2^d/\delta$.
\end{example}

Just as in the abelian case, we may consider direct products of sets with small doubling and thereby obtain new examples. In the non-abelian case, however, there are two genuinely new examples of sets with small doubling such as the following.

\begin{example}\label{ex8} Let $N_1,N_2,N_{1,2}$ be positive integers, and let $A$ be the set of $3 \times 3$ matrices defined as follows. Let
\[ u_1 :=  \left( \begin{matrix} 1 & 1 & 0 \\ 0 & 1 & 0 \\ 0 & 0 & 1 \end{matrix} \right) , \quad u_2 := \left( \begin{matrix} 1 & 0 & 0 \\ 0 & 1 & 1 \\ 0 & 0 & 1 \end{matrix} \right) ,\] and set
\begin{align*}
A &= P(u_1, u_2, [u_1,u_2]; N_1, N_2, N_{1,2}) \\
&:= \{ u_1^{n_1} u_2^{n_2} [u_1, u_2]^{n_{1,2}} : 0 \leq n_1 < N_1, 0 \leq n_2 < N_2, 0 \leq n_{1,2} < N_{1,2}\}.
\end{align*}
Here,
\[ [u_1, u_2] := u_1 u_2 u_1^{-1} u_2^{-1} = \left( \begin{matrix} 1 & 0 & 1 \\ 0 & 1 & 0 \\ 0 & 0 & 1 \end{matrix} \right)\] is the commutator of $u_1$ and $u_2$. Noting that
\[ u_1^{n_1} u_2^{n_2} [u_1, u_2]^{n_{1,2}} = \left( \begin{matrix} 1 & n_1 & n_1n_2 + n_{1,2} \\ 0 & 1 & n_2 \\ 0 & 0 & 1 \end{matrix} \right),\] it follows that  $|A| = N_1N_2 N_{1,2}$. Furthermore one may easily check that
\[ A \cdot A \subseteq  \left\{ \left( \begin{matrix} 1 & n'_1 & n'_{1,2} \\ 0 & 1 & n'_2 \\ 0 & 0 & 1 \end{matrix} \right) : \begin{array}{l} 0 \leq n'_1 < 2N_1\\ 0 \leq n'_2 < 2N_2 \\ 0 \leq n'_{1,2} < 3N_1N_2 + 2N_{1,2}\end{array}\right\}.\] Thus if $N_{1,2} \geq N_1N_2$ then $\sigma[A] \leq 20$.
\end{example}

We call the preceding example a \emph{nilprogression}. The name comes from the fact that the group of $3 \times 3$ upper-triangular matrices (the Heisenberg group) is nilpotent of class $2$, which means that higher-order commutators such as $[u_1, [u_2, u_3]]$ are all equal to the identity. We will define nilprogressions in general later on. The second new type of example combines subgroups with progressions in a manner which is not a direct product. The next example was shown to us by Helfgott.

\begin{example}\label{ex9}
Let $p$ be a large prime, let $r, s, t \in \F_p$ be fixed generators of $\F_p^*$, let $N_1,N_2,N_3$ be positive integers, and define $A$ to be a set of $3 \times 3$ matrices over $\F_p$ as follows. Set
\[ A = H \cdot P(u_1, u_2; u_3; N_1, N_2, N_3)\] where
\[ H := \left\{ \left( \begin{matrix} 1 & x& z \\ 0 & 1 & y \\ 0 & 0 & 1 \end{matrix} \right) : x, y, z \in \F_p\right\},\]
\[ u_1 := \left( \begin{matrix} r & 0 & 0 \\ 0 & 1 & 0 \\ 0 & 0 & 1 \end{matrix} \right), u_2 := \left( \begin{matrix} 1 & 0 & 0 \\ 0 & s & 0 \\ 0 & 0 & 1 \end{matrix} \right),  u_3 := \left( \begin{matrix} 1 & 0 & 0 \\ 0 & 1 & 0 \\ 0 & 0 & t \end{matrix} \right)\] for some $r, s, t \in \F_p^*$, and
\[ P(u_1, u_2, u_3; N_1, N_2, N_3) :=\{ u_1^{n_1} u_2^{n_2} u_3^{n_3} : 0 \leq n_i < N_i\},\] as in Example \ref{ex7} above. Thus
\[ A = \left\{ \left( \begin{matrix} r^{n_1} & x& z \\ 0 & s^{n_2} & y \\ 0 & 0 & t^{n_3} \end{matrix} \right) : x,y,z \in \F_p, 0 \leq n_i < N_i\right\}.\]
One may easily check that
\[ A \cdot A \subseteq \left\{ \left( \begin{matrix} r^{n'_1} & x& z \\ 0 & s^{n'_2} & y \\ 0 & 0 & t^{n'_3} \end{matrix} \right) : x,y,z \in \F_p, 0 \leq n'_i < 2N_i\right\},\] and so $\sigma[A] \leq 8$.\end{example}

Examples \ref{ex8} and \ref{ex9} (and in fact all of the other examples we have mentioned) are \emph{coset nilprogressions}, which turn out to be the appropriate generalisation of a coset progression (cf. Theorem \ref{ftag}) to the nonabelian setting.

To conclude this section we discuss the general notion of a coset nilprogression, that is to say the natural generalisation of all the preceding examples. There are several roughly equivalent definitions, which turn out to be ``essentially the same'', meaning that a coset nilprogression of one type is efficiently covered by coset nilprogressions of another type. We begin by giving one definition of a \emph{nilprogression}, following \cite[Definition 2.5]{bgt-big}.

\begin{definition}[Nilprogression]
Let $u_1,\dots, u_r$ be elements in a nilpotent group of step $s$, that is to say a group in which commutators of order $s+1$ or greater are all trivial. Let $N_1,\ldots,N_r$ be positive integers. Define the \emph{nilprogression} $P^*(u_1, \dots, u_r; N_1, \dots, N_r)$ to consist of all products of the $u_i$ and their inverses $u^{-1}_i$ for which the letter $u_i$ and its inverse $u^{-1}_i$ appear at most $N_i$ times, and the terms in the product may be arranged in an arbitrary order (e.g. $P^*(u_1,u_2;1,1)$ contains $u_1 u_2$, $u_2 u_1$, $u_1^{-1} u_2$, etc.).
\end{definition}

We have written $P^*$ instead of $P$ to emphasise the fact that this is not \emph{quite} the same as the objects $P(u_1,\dots, u_r, N_1, \dots, N_r)$ we considered earlier. It can be shown that if $A$ is a nilprogression then $\sigma[A] \ll_{r,s} 1$. With this in hand it is quite straightforward to define a coset nilprogression.

\begin{definition}[Coset nilprogression]
Let $G$ be a group and suppose that $u_1,\dots u_r \in G$. Suppose that $H$ is a finite subgroup of $G$ which is normal in $G_0 := \langle u_1,\dots, u_r\rangle$. Suppose that $G_0/H$ is nilpotent of step $s$. Then the set $H \cdot P^*(u_1,\dots, u_r; N_1,\dots, N_r)$ is called a \emph{coset nilprogression} of rank $r$ and step $s$.
\end{definition}

Once again one may show that if $A$ is a coset nilprogression then $\sigma[A] \ll_{r,s} 1$, that is to say coset nilprogressions are examples of sets with small doubling constant.
An alternative way to define nilprogressions is as the image of a ``ball'' in a free nilpotent group. This gives objects which generalise our examples more directly, but requires quite a lot of nomenclature concerning basic commutators. For more details see \cite[Definition 1.4]{breuillard-green-nil} and also Tointon's paper \cite{tointon}, which clarifies aspects of the relation between the two types of nilprogression\footnote{Note that Tointon calls the objects of \cite[Definition 1.4]{breuillard-green-nil} \emph{nilpotent progressions} but otherwise his nomenclature is essentially the same as ours.}.

\section{Small doubling in arbitrary groups -- theorems}\label{sec3}

A basic aim in the subject, first suggested by Helfgott and Lindenstrauss, is to show that all sets of small doubling are related to one of the examples discussed in the preceding section, namely coset nilprogressions. Theorems \ref{thm1.1}, \ref{thm1.2} and \ref{ftag} in \S \ref{sec1} were results of this type in the abelian case. While such results have now been established by the authors in full generality, for applications such as the ones in \S \ref{sec6} one often needs a result with good bounds, and in the general case none are known. Much work, then, has been done on specific groups (for example matrix groups) where additional structure is extremely helpful and quite precise results have been obtained. Furthermore one does not always need (in fact one essentially never needs) to see the full structure of a coset nilprogression to draw useful applications.

With the aim of clarity of exposition, we will in this section only examine results of the following type, which we call ``The structure theorem''.

\begin{prototheorem}\label{thm3}
Suppose that $A$ is a set in some group $G$, belonging to a specific class \textup{(}matrix group, nilpotent group, solvable group, free group \dots\textup{)} and that $\sigma[A] \leq K$. Then there is a set $A' \subseteq A$, $|A'| \geq |A|/K'$, with $A'$ contained in some set $P$ lying in a ``structured'' class of sets $\mathcal{C}$.
\end{prototheorem}

These results are, therefore, a little weaker than the theorems of \S \ref{sec1}, which cover the whole of $A$ by a structured object. However theorems of that type can be obtained from results having the form of Prototype Theorem \ref{thm3}, and to an extent this amounts only to additive-combinatorial ``book-keeping'', although some more precise variants of this type are both deep and interesting. \vspace{11pt}

\emph{Doubling less than 2.} The case of very small doubling, in which $\sigma[A]$ is close to $1$, received attention at the hands of Freiman \cite{frei-small} almost 50 years ago (see also \cite{freiman-non-abelian}). He showed (among other things) that if $\sigma[A] < 3/2$, then $H := A \cdot A^{-1}$ is a finite group of order $|A \cdot A| = \sigma[A] |A|$, and that $A \subseteq xH = Hx$ for some $x$. In a similar vein, an argument of Hamidoune \cite{hamidoune, tao-hamidoune} shows that if $\sigma[A] < 2-\eps$ for some $\eps>0$, then there exists a finite group $H$ of order $|H| \leq \frac{2}{\eps} |A|$, such that $A$ can be covered by at most $\frac{2}{\eps}-1$ right-cosets $Hx$ of $H$. See also \cite{sanders-hami} for a different proof of a related result.  Very recently, a more complete classification of the sets $A$ with $\sigma[A] < 2$ was achieved in \cite{devos}.
\vspace{11pt}

\emph{Small doubling in nilpotent and solvable groups.} Results along the lines of Theorem \ref{thm3} with $G$ nilpotent or solvable of fixed step have been developed in various papers \cite{breuillard-green-nil, bg2, fisher-katz-peng,  gill-helfgott,  sanders-monomial, tao-noncommutative, tao-solvable,  tointon}, there being a tradeoff in each case between generality and the quality of the bounds obtained. A quite satisfactory recent result of Tointon \cite{tointon} is the following, which applies to arbitrary nilpotent groups of fixed step.

\begin{theorem}[Tointon] Suppose that $G$ is nilpotent of step $s$. Then the structure theorem holds with $K' \sim \exp(K^{O_s(1)})$ and with $\mathcal{C}$ consisting of coset nilprogressions of rank $K^{O_s(1)}$ and size no more than $\exp(K^{O_s(1)})|A|$.
\end{theorem}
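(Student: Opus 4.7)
The plan is to proceed by induction on the nilpotency step $s$. When $s=1$ the group is abelian and the Freiman--Green--Ruzsa theorem (Theorem \ref{ftag}) directly supplies a coset progression of rank $O_K(1)$ and size $\ll_K |A|$ containing all of $A$; this is a coset nilprogression of step $1$ with bounds of the required shape (in fact with $K' = 1$), handling the base case.

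For the inductive step, let $Z := \gamma_s(\langle A \rangle)$, the last nontrivial term of the lower central series of $\langle A \rangle$. Then $Z$ is abelian and central in $\langle A \rangle$, while $\bar{G} := \langle A \rangle / Z$ is nilpotent of step $s-1$. Writing $\pi : \langle A \rangle \to \bar{G}$ for the projection, a dyadic pigeonhole on fibre sizes yields a refinement $A_1 \subseteq A$ with $|A_1| \geq |A|/(\log K)^{O(1)}$ on which $\pi$ has essentially constant fibre size; combined with the non-commutative Pl\"{u}nnecke--Ruzsa inequalities, this forces $\sigma[\pi(A_1)] \leq K^{O(1)}$. Applying the inductive hypothesis in $\bar{G}$ then produces a subset $\bar{B} \subseteq \pi(A_1)$ of density $\exp(-K^{O_{s-1}(1)})$ inside a coset nilprogression $\bar{Q}$ of rank $K^{O_{s-1}(1)}$ and volume $\leq \exp(K^{O_{s-1}(1)}) |\pi(A_1)|$. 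Pulling back to $A_2 := A_1 \cap \pi^{-1}(\bar{B})$ and averaging once more over cosets of $Z$, the central ``difference set'' of typical fibres, $D := Z \cap (A_2^{-1} A_2)$, is itself of small doubling in $Z$, so Theorem \ref{ftag} provides a coset progression $P_Z \subseteq Z$ of rank $O_K(1)$ and controlled size covering $D$.

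The main obstacle---where the real work lies---is the \emph{lifting} step: assembling the structure of $\bar{Q} \subseteq \bar{G}$ and the abelian structure $P_Z \subseteq Z$ into a single coset nilprogression $P \subseteq G$ that contains a large fraction of $A_2$. One selects lifts $u_i \in \langle A \rangle$ of the generators $\bar{u}_i$ of $\bar{Q}$ and forms the candidate $P := P^{*}(u_1, \ldots, u_r; N_1, \ldots, N_r) \cdot P_Z$ with appropriate $N_i$. The delicate point is that rearranging words in the $u_i$ via commutator identities introduces extra central factors, because $[u_i, u_j]$ need not vanish modulo $Z$; these extra factors must be absorbed into a polynomial enlargement of $P_Z$ without inflating its rank too much. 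Standard commutator calculus in the $s$-step nilpotent group $\langle A \rangle$ shows that this central inflation is polynomial in the exponents $N_i$ with degree bounded in terms of $s$. Iterating this polynomial cost over $s$ steps of induction is precisely what produces the final exponent $O_s(1)$ in the rank bound $K^{O_s(1)}$ and the multiplicative loss $\exp(K^{O_s(1)})$, rather than a step-independent constant.
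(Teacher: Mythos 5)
This paper is a survey; it states Tointon's theorem and cites \cite{tointon} but gives no proof of it, so there is no in-paper argument to compare against. I will therefore assess your sketch on its own terms.

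The overall strategy---induct on the step, project to a quotient of smaller step, apply the abelian Green--Ruzsa theorem to the central kernel, then lift via commutator calculus, with the main cost coming from the lifting---is the right one and broadly consistent with how Tointon (and earlier nilpotent-case arguments of Breuillard--Green, Fisher--Katz--Peng, Tao, and others) proceed. You correctly flag the lifting step as where the real difficulty lies. However there are concrete problems in the details.

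First, the fibring step is wrong as stated. A ``dyadic pigeonhole on fibre sizes'' cannot yield a refinement $A_1\subseteq A$ with $|A_1|\geq |A|/(\log K)^{O(1)}$; the natural loss from such a pigeonhole is a factor of order $\log|A|$, which is not bounded in terms of $K$ at all. The standard way to make the fibring work with $K$-dependent losses is to first replace $A$ by a nearby $K^{O(1)}$-approximate group $B$ using Theorem~4.2 of the paper (due to Tao), for which the fibring lemma is clean: if $B$ is a $K$-approximate group and $\pi$ a homomorphism, then $\pi(B)$ is a $K$-approximate group and $B^2\cap\ker\pi$ is a $K^{O(1)}$-approximate group whose size accounts for the typical fibre. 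Your claimed $(\log K)^{O(1)}$ loss is incompatible with the target bound $K'\sim\exp(K^{O_s(1)})$---if your loss were really polylogarithmic in $K$ you would be proving something far stronger than Tointon's theorem, which should itself be a warning sign. The exponential in $K$ in the final bound comes in no small part from the abelian Green--Ruzsa input, whose implicit constant $\ll_K$ is already exponential in $K$, and this needs to be tracked through the induction.

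Second, and more substantively, the lifting paragraph is where the entire theorem lives and the sketch there is too thin to be convincing. Choosing arbitrary lifts $u_i$ of the generators $\bar u_i$ of $\bar Q$ and forming $P^*(u_1,\dots,u_r;N_1,\dots,N_r)\cdot P_Z$ does not obviously yield a coset nilprogression of bounded rank covering a large part of $A_2$: the commutators $[u_i,u_j]$ land in $Z$ but need not lie in (or be efficiently covered by) $P_Z$, so one must enlarge $P_Z$ to contain all such commutator contributions with the correct multiplicities, and argue that the resulting rank stays $K^{O_s(1)}$ rather than blowing up; one must also deal with the fact that $\bar Q$ is itself a coset nilprogression, i.e.\ contains a finite normal subgroup $\bar H$, and one needs a compatible subgroup upstairs. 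Collingwood/Tointon handle this with a careful analysis of basic commutators and a change of generating set; saying ``standard commutator calculus shows the central inflation is polynomial'' gestures at the right phenomenon but does not establish that the \emph{rank} stays controlled, which is the actual bottleneck. Finally, note that Tointon's own argument projects to the abelianization $G/[G,G]$ and treats the derived subgroup $[G,G]$ (which has step $s-1$) by induction, exploiting that the commutator map factors through the abelianization; your induction runs in the opposite direction, quotienting by $\gamma_s$. Both directions can be made to work, but the bookkeeping differs, and in the former the abelian case of Green--Ruzsa is applied ``at the top'' rather than ``at the bottom.''

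In short: right skeleton, but the pigeonhole bound is incorrect, the passage to approximate groups is missing, and the lifting step---the heart of the matter---is asserted rather than argued.
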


Let us also mention a short note of the authors \cite{bgt-nilpotent-dimension-lemma}, which adapts an argument of Gleason from the theory of locally compact groups to show that the $s$-dependence is unnecessary when $G$ is torsion-free. \vspace{11pt}

\emph{Small doubling in matrix groups.} When $G$ is a group of matrices over a field, one has available a wide variety of machinery. One may attempt to exploit the fact that matrix multiplication involves both addition and multiplication of the entries, and hence bring into play results from \emph{sum-product theory} such as \cite{bkt}. One may also try to involve algebraic geometry and particularly the theory of algebraic groups. All of these techniques have enjoyed success.

The first result about small doubling in matrix groups was due to Elekes and Kir\'aly \cite{elekes-kiraly}.

\begin{theorem}[Elekes-Kir\'aly]\label{elekes-kiraly}
Suppose that $G = \SL_2(\R)$. Then the structure theorem holds for some $K' = O_K(1)$ and with $\mathcal{C}$ consisting of abelian subgroups of $\SL_2(\R)$.
\end{theorem}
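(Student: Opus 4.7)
My plan is to locate a subset $A'\subseteq A$ of size $\geq|A|/K'$ lying inside the centralizer of some non-central element. This suffices because in $\SL_2(\R)$ the centralizer $C_G(g)$ of any $g\notin\{\pm I\}$ is abelian: it is either a maximal torus (split or non-split) or a unipotent subgroup together with the centre. Removing $\pm I$ from $A$ costs at most two elements and does not affect the claim, so I may assume henceforth that $A$ contains no central element.

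The first step is to apply the non-abelian Pl\"unnecke--Ruzsa inequalities to the hypothesis $\sigma[A]\leq K$, obtaining $|A^{\eps_1}\cdots A^{\eps_k}|\ll_{K,k}|A|$ for any bounded-length word in $A$ and $A^{-1}$. In particular the commutator set $\{[a,b]:a,b\in A\}$ is contained in $A\cdot A\cdot A^{-1}\cdot A^{-1}$ and so has size at most $K^{O(1)}|A|$. Viewing the commutator map $c:A\times A\to\SL_2(\R)$, $(a,b)\mapsto aba^{-1}b^{-1}$, as taking at most $K^{O(1)}|A|$ values on a domain of size $|A|^2$, Cauchy--Schwarz gives the energy bound $\sum_{g}|c^{-1}(g)|^2\geq|A|^3/K^{O(1)}$. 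Consequently, either the identity fibre $c^{-1}(I)$ already has size at least $|A|^2/K^{O(1)}$, or some non-identity fibre $c^{-1}(g_0)$ has size at least $|A|/K^{O(1)}$.

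In the first case there are $\geq|A|^2/K^{O(1)}$ commuting pairs $(a,b)\in A\times A$, so a further pigeonhole over $b$ produces some $b\in A$ with $|A\cap C_G(b)|\geq|A|/K^{O(1)}$; since $b$ is non-central by the reduction, $C_G(b)$ is an abelian subgroup and we are done. The complementary case, where many pairs $(a,b)\in A\times A$ share a common non-identity commutator value $g_0$, is where the specific structure of $\SL_2(\R)$ must be used. Following the strategy of Elekes--Kir\'aly, I would think of elements of $\SL_2(\R)$ as M\"obius transformations on $\mathbb{P}^1(\bar\R)$, so that $[a,b]=g_0$ becomes an algebraic constraint on the fixed-point pairs of $a$ and $b$; combining this constraint with the Pl\"unnecke--Ruzsa bounds and with an incidence-geometric input such as the Szemer\'edi--Trotter theorem in $\R^2$ (or a Bourgain--Katz--Tao sum-product estimate applied to matrix entries) should force many elements of $A$ to share a common pair of fixed points on $\mathbb{P}^1(\bar\R)$, placing them in the corresponding maximal torus.

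The main obstacle I expect is precisely this last step, namely the passage from ``many pairs share a common non-identity commutator'' to ``many elements share a common fixed-point pair''. Because $\SL_2(\R)$ is semisimple, its multiplication law genuinely mixes addition and multiplication of matrix entries, and ruling out a ``general position'' configuration of fixed points requires a real-variable incidence or sum-product input, which is the technical heart of the Elekes--Kir\'aly argument. Once a large subset of a maximal torus has been isolated, the remaining packaging into the form demanded by Prototype Theorem \ref{thm3} is a standard Pl\"unnecke--Ruzsa bookkeeping exercise.
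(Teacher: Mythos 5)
The paper is a survey and does not give a proof of Theorem \ref{elekes-kiraly}: it simply states the result and cites \cite{elekes-kiraly}, so there is nothing in the source against which to line up your argument step by step. Evaluated on its own terms, your outline is structurally reasonable, but two points deserve flagging.

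First, you invoke ``the non-abelian Pl\"unnecke--Ruzsa inequalities'' directly from the hypothesis $\sigma[A]\leq K$, but the survey itself emphasises (\S\ref{sec4}) that in a nonabelian group a bound on $|A\cdot A|$ does \emph{not} give control of $|A\cdot A\cdot A^{-1}\cdot A^{-1}|$ and higher product sets. To make your first step legitimate you must first pass to a $K^{O(1)}$-approximate group covering a large piece of $A$ (Theorem 4.2 of the survey, from \cite{tao-noncommutative}), or else assume small tripling and use Ruzsa's covering lemma; as written the step is a genuine gap.

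Second, and more substantively, your Cauchy--Schwarz dichotomy isolates the easy commuting case and then defers the complementary case --- many pairs sharing a fixed non-identity commutator $g_0$ --- to ``an incidence-geometric input such as Szemer\'edi--Trotter,'' with no indication of what incidence configuration one actually counts or why few incidences force a shared fixed-point pair. That second case is essentially the entire content of the theorem: the trivial pigeonhole that closes case one yields nothing here (a fibre of size $|A|/K^{O(1)}$ spread over $|A|$ choices of $b$ gives an average below $1$). Moreover, the commutator-energy preprocessing you use is a modern device (in the spirit of Helfgott and of later work cited in \S\ref{sec3}) rather than the route Elekes and Kir\'aly took; their argument works directly with sets of M\"obius transformations, parametrising them by fixed points and applying Szemer\'edi--Trotter to an incidence system built from compositions $f\circ g$, without the commutator reduction. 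So the proposal is a plausible high-level plan, but the hard half is currently a placeholder and the claimed alignment with Elekes--Kir\'aly's strategy is looser than suggested.
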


One interesting consequence of this is that it implies the same result with $G$ equal to the free group.  This is because $\SL_2(\R)$ contains two elements (and hence $k$ elements, for any $k$) generating a free group. Further important work on additive combinatorics in the free group was done by Razborov \cite{razborov} and Safin \cite{safin}.

Elekes and Kir\'aly did not obtain useful bounds for $K'$.  Subsequent advances have addressed this issue, and have also led to the replacement of $\SL_2(\R)$ by an arbitrary matrix group. Significant progress in this regard was made by Helfgott \cite{helfgott-sl2}, who applied the sum-product theorem of Bourgain, Katz and Tao \cite{bkt} to show\footnote{In fact Helfgott does not quite state this result or the one for $\SL_3(\C)$, his concern having been with $\SL_2(\F_p)$ and $\SL_3(\F_p)$, but it follows very easily from his methods.} that Theorem \ref{elekes-kiraly} holds with $G$ replaced by $\SL_2(\C)$ and with $K'$ having polynomial dependence on $K$.  He subsequently generalised this to $\SL_3(\C)$, with $\mathcal{C}$ consisting of nilpotent subgroups of $\SL_3$. Chang \cite{chang-sl3} also proved various results in this direction, for example obtaining a structure theorem in the case $G = \SL_3(\Z)$ prior to the work of Helfgott.

A breakthrough came in 2009 with a paper of Hrushovski \cite{hrush}, who applied model-theoretic arguments to generalise Elekes--Kir\'aly's result to $\SL_n$.

\begin{theorem}[Hrushovski]\label{hrush}
Suppose that $G = \SL_n(\C)$. Then the structure theorem holds for some $K' = O_K(1)$ and with $\mathcal{C}$ consisting of solvable subgroups of $\SL_n(\C)$.
\end{theorem}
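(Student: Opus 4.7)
The plan is to follow Hrushovski's model-theoretic strategy. Assume for contradiction the conclusion fails, so that for each $i$ there exist $A_i \subseteq \SL_n(\C)$ with $\sigma[A_i] \leq K$ but with no subset of density $\geq 1/K'_i$ contained in a solvable subgroup, where $K'_i \to \infty$. Forming an ultraproduct along such a sequence yields an internal subset $\mathbf{A}$ of $\SL_n(\ultra\C)$, equipped with the finitely additive probability measure $\mu$ inherited from normalised counting, satisfying $\mu(\mathbf{A}\cdot\mathbf{A}) \leq K\,\mu(\mathbf{A})$. Non-commutative Pl\"unnecke--Ruzsa covering (see \cite[Chapter 2]{tv-book}) upgrades $\mathbf{A}$ to an ultra-approximate group $\mathbf{H}$ of comparable measure, so it suffices to find a $\mu$-positive subset of $\mathbf{H}$ lying inside a solvable subgroup of $\SL_n(\ultra\C)$.

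The main input is \emph{Hrushovski's Lie model theorem}: there is a type-definable normal subgroup $\mathbf{H}^{00} \subseteq \mathbf{H}^4$ of $\langle \mathbf{H}\rangle$ such that the quotient $L := \langle \mathbf{H}\rangle / \mathbf{H}^{00}$ is, after quotienting out a compact normal subgroup (Gleason--Yamabe), a connected Lie group in which the image of $\mathbf{H}$ is a compact neighbourhood of the identity. Because $\mathbf{H}$ sits inside $\SL_n(\ultra\C)$, one realises the Lie algebra $\mathfrak{l}$ of $L$ as a Lie subalgebra of $\mathfrak{sl}_n(\C)$ by differentiating one-parameter subsets of $\mathbf{H}$ at the identity; in particular $\dim L \leq n^2 - 1$.

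The crux is to show that $\mathfrak{l}$ is solvable. Were it not, its Levi decomposition would contribute a non-trivial simple factor $\mathfrak{s} \hookrightarrow \mathfrak{sl}_n(\C)$, and lifting through the quotient map $\pi$ would give a Zariski-dense subset of the corresponding semisimple algebraic subgroup $S \leq \SL_n(\C)$ consisting of elements realised by bounded-length words in $\mathbf{H}$. By the Tits alternative combined with an escape-from-subvarieties argument in the spirit of \cite{bkt}, one would then locate two elements in a bounded power of $\mathbf{H}$ generating a free subgroup, yielding exponentially many distinct words in $\mathbf{H}^{O(1)}$ and contradicting the approximate-group estimate $\mu(\mathbf{H}^k) \ll_k \mu(\mathbf{H})$ valid for every standard $k$. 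Hence $\mathfrak{l}$ is solvable; by Lie's theorem a conjugate of $\mathfrak{l}$ lies in the upper-triangular subalgebra, so $L$ is realised inside a Borel subgroup $B \leq \SL_n(\C)$.

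The final step is to transfer this back. Because $L$ embeds in $B$, the image of $\mathbf{H}$ in $L$ sits in a solvable group, so $\mathbf{H}$ itself is contained in a solvable subgroup of $\SL_n(\ultra\C)$ up to the infinitesimal fluctuation $\mathbf{H}^{00}$; extracting a single coset of positive $\mu$-measure and unwinding the ultraproduct via standard transfer yields, for almost all $i$, a subset $A'_i \subseteq A_i$ of density $\geq 1/K'$ with $K' = O_K(1)$ contained in a solvable subgroup of $\SL_n(\C)$, contradicting our choice of counterexamples. The principal obstacle is the third paragraph: ruling out non-solvable Lie models requires genuine algebraic-geometric input about $\SL_n(\C)$ (via the Tits alternative or related escape-from-subvarieties results), and this is exactly what a structure theorem for an arbitrary abstract group cannot exploit.
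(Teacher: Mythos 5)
Note first that the paper under discussion is a survey: it states this theorem and refers to Hrushovski's paper \cite{hrush} for the proof, so there is no argument in the text to compare your sketch against. Judged on its own merits, your sketch has two gaps I do not see how to close. The first is the assertion that the Lie algebra $\mathfrak{l}$ of the Lie model $L=\langle\mathbf{H}\rangle/\mathbf{H}^{00}$ can be realised inside $\mathfrak{sl}_n(\C)$ by ``differentiating one-parameter subsets of $\mathbf{H}$.'' The Lie model is a \emph{quotient} of the group generated by $\mathbf{H}$ by a type-definable infinitesimal subgroup; it is not a subgroup of $\SL_n(\ultra\C)$, and there is no natural map from $L$ or $\mathfrak{l}$ into $\SL_n(\C)$ or $\mathfrak{sl}_n(\C)$. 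Injecting the algebraic geometry of $\SL_n$ into the Lie model is exactly the hard step, and it does not come for free from the Lie model theorem (which is blind to the ambient group). The second gap is the claimed contradiction from a free subgroup: if $a,b\in\mathbf{H}^m$ generate a free group, words of length $\ell$ lie only in $\mathbf{H}^{m\ell}$, and the approximate-group bound $|\mathbf{H}^{m\ell}|\le K^{m\ell-1}|\mathbf{H}|$ grows at least as fast as the $\approx 4^{\ell}$ count of distinct words; no contradiction follows for fixed standard $K$, and indeed a $K$-approximate group can easily contain two free generators in a bounded power.

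For what it is worth, Hrushovski's actual route in \cite{hrush} for $\SL_n(\C)$ does not go through the Lie model theorem. One instead applies his stabilizer theorem in the model-theoretic setting of algebraically closed fields, where the relevant type-definable ``near-stabilizer'' subgroup associated to the ultra-approximate group is automatically definable, hence an algebraic subgroup of $\SL_n$; solvability is then extracted from the structure theory of algebraic groups together with Larsen--Pink type dimension estimates, which is where the algebraic geometry of $\SL_n(\C)$ genuinely enters. Your closing remark correctly identifies that ruling out non-solvable models requires $\SL_n$-specific input, but the mechanism you propose for supplying that input does not work as written.
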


Combining this with the main result of \cite{bg2}, ``solvable'' can be replaced by ``nilpotent''.  More down-to-earth proofs are now known due to work of Pyber-Szab\'o \cite{pyber-szabo} and the authors \cite{bgt,bgt-glasgow}, and furthermore these arguments show that $K'$ can be taken to be $K^{O_n(1)}$. These arguments use some of Helfgott's ideas as well as some more purely algebraic group theoretic facts. The argument of \cite{bgt} was, in addition, heavily influenced by groundbreaking work of Larsen and Pink \cite{larsen-pink} in their work on finite subgroups (as opposed to finite subsets of small doubling) of linear groups.

We have discussed the case $G = \SL_n(\C)$. However the analogous question over finite fields is more interesting, enjoys wider application, and was the historial motivation for much of the work we have just mentioned. In this setting Helfgott's result states the following.

\begin{theorem}[Helfgott]
Suppose that $G = \SL_2(\F_p)$. Then either $|A| \geq K^{-C} |G|$, or else the structure theorem holds with $K'$ polynomial in $K$ and with $\mathcal{C}$ consisting of solvable \textup{(}or upper-triangular\textup{)} subgroups of $\SL_2(\F_p)$.
\end{theorem}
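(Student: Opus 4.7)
My plan is to follow Helfgott's original strategy: combine the sum-product theorem of Bourgain--Katz--Tao with an analysis of the action of $A$ on maximal tori of $\SL_2(\F_p)$. By standard Pl\"unnecke--Ruzsa arguments I would first replace $A$ by a symmetric set of comparable size containing the identity, so that $|A^n| \leq K^{O(1)}|A|$ for every fixed $n$. The goal, assuming $|A| < K^{-C}|G|$, is to show that after passing to a subset $A' \subseteq A$ of density $K^{-O(1)}$, the set $A'$ is contained in an upper-triangular (Borel) subgroup.

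The next step is to produce a regular semisimple element $g \in A^m$ for some bounded $m$. Since the non-regular locus in $\SL_2$ is the proper subvariety $\{\tr^2 = 4\}$, an ``escape from subvarieties'' argument either produces such $g$ or forces $A$ to concentrate on the unipotent and $\pm\id$ loci, which is handled separately by a direct reduction to the upper-triangular case. The centraliser $T := Z_G(g)$ is then a maximal torus, cyclic of order $p \pm 1$, isomorphic to a multiplicative subgroup of $\F_p^*$ or (in the non-split case) to the norm-one subgroup of $\F_{p^2}^*$.

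The heart of the argument is a dichotomy concerning $S := A^m \cap T$ and the conjugation action of $A$ on $T$. On the one hand, $S \cdot S \subseteq A^{O(1)} \cap T$ controls the multiplicative doubling of $S$ viewed inside the cyclic torus. On the other hand, for any $a \in A$ not normalising $T$, the conjugate $a T a^{-1}$ is a distinct torus, and the trace $\tr(s_1 \cdot a s_2 a^{-1})$ for $s_i \in S$ is an affine--bilinear form in the eigenvalues of $s_1$ and $s_2$, landing inside $\tr(A^{O(1)})$. This is exactly the configuration to which the Bourgain--Katz--Tao sum-product theorem applies: the trace map realises an additive interaction between the multiplicatively structured sets $S$ and $aSa^{-1}$, forcing $|\tr(A^{O(1)})|$ (and hence $|A^{O(1)}|$) to grow polynomially relative to $|S|$, unless $S$ is either very small or very close to all of $\F_p^*$. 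In the ``very close'' case, iterating the growth gives $|A^{O(1)}| \geq K^{-O(1)}|G|$, contradicting the hypothesis $|A| < K^{-C}|G|$ after a final tripling step; in the ``very small'' case, a pigeonholing on tori shows that $A$ is concentrated on $N_G(T)$.

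Putting this together, the only surviving scenario is that a $K^{-O(1)}$-proportion of $A$ lies in $N_G(T)$, and since $[N_G(T):T] = 2$ a further halving produces a subset of density $K^{-O(1)}$ lying in the abelian group $T$, which sits inside a Borel subgroup of $\SL_2(\F_p)$; this gives the structure theorem with $\mathcal{C}$ the class of solvable (upper-triangular) subgroups. The main obstacle I expect is maintaining polynomial dependence on $K$ throughout: one must use the Bourgain--Katz--Tao theorem with an explicit exponent and carefully track the tripling constants of the various slices $A^m \cap gTg^{-1}$ as $g$ varies over $A$, rather than merely invoking sum-product qualitatively. A secondary subtlety is the non-split torus case, in which the sum-product input must be taken in $\F_{p^2}$ and one must rule out concentration on a proper subfield; this requires the full Bourgain--Katz--Tao theorem in $\F_{p^2}$ together with a bound preventing $\tr(A)$ from collapsing into $\F_p$ in a degenerate way.
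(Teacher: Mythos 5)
The survey does not actually prove this theorem: it states it and refers to Helfgott \cite{helfgott-sl2} (and the later reproofs in \cite{pyber-szabo,bgt}), so there is no in-paper argument for you to match. What you have written is an attempted reconstruction of Helfgott's original sum-product proof, and at the level of strategy it is recognisably correct: escape from subvarieties to find a regular semisimple element, play off the multiplicative structure of a slice $S = A^{O(1)} \cap T$ of a maximal torus against additive information carried by traces involving a conjugate torus $aTa^{-1}$, feed this into Bourgain--Katz--Tao, and close with a trichotomy (growth, concentration in $N_G(T)$, or $|A| \geq K^{-C}|G|$). This is Helfgott's route rather than the later Larsen--Pink/pivoting route of Pyber--Szab\'o and \cite{bgt}, which avoids sum-product; both are legitimate, and you are entitled to choose either.

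There are, however, three concrete places where your outline would not survive being written out. First, your opening invocation of ``standard Pl\"unnecke--Ruzsa'' to get $|A^n| \leq K^{O(1)}|A|$ is not available in a noncommutative group: $|A\cdot A| \leq K|A|$ does not control $|A^3|$, and the correct device is either to assume a tripling bound from the outset (as Helfgott does) or to pass to a $K^{O(1)}$-approximate group of comparable size using the covering result quoted as Theorem 4.2 of this survey. Second, your conclusion places the torus $T$ ``inside a Borel subgroup of $\SL_2(\F_p)$''; this is false for the non-split torus of order $p+1$, which is not contained in any $\F_p$-rational Borel. The correct landing site is $N_G(T)$, which is dihedral and hence solvable, so the theorem's conclusion with $\mathcal{C}$ the solvable subgroups is still reached, but the Borel claim is an error and sits oddly against your own (correct) remark that the non-split case needs separate treatment; relatedly, in that case it is the eigenvalues, not the traces, that live in $\F_{p^2}$ (traces of $\F_p$-rational matrices are always in $\F_p$), so the sum-product must be applied to the eigenvalue data via $\lambda \mapsto \lambda + \lambda^{-1}$. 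Third, the step you describe as ``a pigeonholing on tori'' is in fact the hardest part of Helfgott's paper: it is a double count balancing an upper bound on $|A^{O(1)} \cap gTg^{-1}|$ (from the sum-product estimate) against a lower bound obtained by distributing the regular semisimple elements of $A^{O(1)}$ over the roughly $|G|/|N_G(T)|$ conjugate tori, which intersect pairwise only in $\{\pm\id\}$. Without carrying that count through explicitly you will not be able to certify that $K'$ is polynomial in $K$, which is the whole point of the theorem's quantitative content.
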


This was generalised to $\F_q$, $q$ arbitrary, by Dinai \cite{dinai}. Subsequently, Helfgott \cite{helfgott-sl3} generalised his previous argument to the setting of $\SL_3(\F_p)$ and finally Pyber-Szab\'o \cite{pyber-szabo} obtained the same result in $\SL_n(\F_q)$, as well as in more general finite simple groups of Lie type and bounded rank, while the authors \cite{bgt} obtained simultaneously closely related results. See the related surveys \cite{bmsri,pyber-szabo-msri}.\vspace{11pt}

\emph{General groups.}  A qualitatively complete classification of sets of small doubling in arbitrary nonabelian groups was obtained by the authors \cite{bgt-big}.

\begin{theorem}\label{bgt-theorem}
Suppose that $G$ is an arbitrary group. Then the structure theorem holds for some $K' = O_K(1)$ and with $\mathcal{C}$ consisting of coset nilprogressions $P$ with rank and step $O_K(1)$, and with $|P| \leq |A|$.
\end{theorem}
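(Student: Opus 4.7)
The plan follows \cite{bgt-big}: reduce to approximate groups, pass to a nonprincipal ultraproduct, apply Hrushovski's Lie model theorem, and extract nilpotent structure via a Gleason-type commutator estimate.

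First, by the Pl\"unnecke--Ruzsa inequalities together with standard manipulations (\cite[Chapter 2]{tv-book}), we may replace the hypothesis $\sigma[A]\leq K$ by the assumption that (a large piece of) $A$ is contained in a symmetric $K^{O(1)}$-approximate subgroup $H$: that is, $e\in H=H^{-1}$, $H\cdot H$ is covered by $K^{O(1)}$ left-translates of $H$, and $|H|\asymp|A|$. Proving the structure theorem for such $H$ then implies the version for arbitrary sets of doubling $\leq K$ at the cost of absorbing an extra $O_K(1)$ factor into $K'$.

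Second, suppose for contradiction that the conclusion fails for some fixed $K$. Then there is a sequence of $K^{O(1)}$-approximate groups $H_n\subset G_n$ such that no bounded power $H_n^{O_K(1)}$ is contained in a coset nilprogression of rank and step $O_K(1)$ and size $\leq |H_n|$. Passing to a nonprincipal ultraproduct gives an internal approximate subgroup $H^{*}\subset G^{*}$. Hrushovski's Lie model theorem \cite{hrush}, applied to $H^{*}$, supplies a surjective ``local homomorphism'' $\pi$ from a bounded power of $H^{*}$ onto an open neighbourhood of the identity in a locally compact group $L$, with kernel a normal ``infinitesimal'' subgroup $H^{00}$. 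The Gleason--Yamabe theorem then allows us, after quotienting by a compact normal subgroup of $L$, to assume that $L$ is a connected Lie group.

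The core structural step is to show that $L$ is nilpotent of step $O_K(1)$, and that a small ball in $L$ pulls back through $\pi$ to a coset nilprogression of the required form. To this end, define an escape norm on $H^{*}$ by
\[\|g\|_{e}:=\inf\bigl\{\tfrac{1}{n} : g, g^{2}, \dots, g^{n}\in H^{*}\bigr\},\]
and transfer Gleason's commutator estimate $\|[g,h]\|_{e}\ll \|g\|_{e}\|h\|_{e}$ to the approximate-group setting. Iteration then forces sufficiently long iterated commutators of elements of positive escape norm to have vanishing escape norm and thus to lie in $H^{00}$; translated through $\pi$, this says $\operatorname{Lie}(L)$ is nilpotent of step $s=O_K(1)$. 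Choosing exponential coordinates on a small ball in $L$, pulling them back by $\pi$, and combining with a generating set for the kernel piece, produces a coset nilprogression of rank and step $O_K(1)$ inside $(H^{*})^{O_K(1)}$ covering a positive proportion of $H^{*}$. \L{}o\'s's theorem then descends this to all sufficiently large $n$, contradicting the choice of the $H_n$.

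The principal obstacle is the Gleason commutator bound in the approximate-group category: Gleason's original ``no small subgroups implies Lie'' argument for topological groups has to be reformulated with an escape norm that is only quasi-subadditive with $K$-dependent constants, and the inductive control of commutators requires substantial new combinatorial input. A secondary but delicate difficulty is the quantitative bookkeeping — one must ensure that what emerges in the ultralimit as a nilpotent Lie structure on $L$ can be pulled back via \L{}o\'s's theorem to bona fide coset nilprogressions of bounded rank, step, and size at each finite stage, rather than only in the limit, which forces one to work with the internal/definable category throughout.
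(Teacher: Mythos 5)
This survey does not actually prove Theorem \ref{bgt-theorem}; it only remarks that the crucial themes are Hrushovski's correspondence principle linking ultra approximate groups to locally compact groups, the theory surrounding Hilbert's fifth problem (Gleason--Yamabe), and an ultrafilter construction analogous to the asymptotic cone, and then defers to \cite{bgt-big} and \cite{tao-book}. So there is no in-paper proof to compare against; your outline is, however, a faithful summary of the themes the survey flags and of the actual argument in \cite{bgt-big}: reduce to approximate groups via \cite{tao-noncommutative}, take a nonprincipal ultraproduct, extract a locally compact Lie model via Hrushovski's theorem and Gleason--Yamabe, exploit escape norms and Gleason-type commutator estimates, and descend to finitary statements via \L{}o\'s's theorem. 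Your closing identification of the main difficulties --- adapting Gleason's lemmas to the approximate category with $K$-dependent losses, and the internal/definable bookkeeping needed to descend through \L{}o\'s --- is on the mark.

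One step is mis-stated in a way worth flagging. The claim that iterating $\|[g,h]\|_{e}\ll\|g\|_{e}\|h\|_{e}$ drives escape norms to zero in $O_K(1)$ steps and thereby shows $\operatorname{Lie}(L)$ is nilpotent of step $O_K(1)$ does not work as written. The escape norm takes values in $[0,1]$ and the elements that generate the positive-dimensional directions of $L$ have escape norm of order $1$; since the implied constant in the commutator estimate is $\geq 1$ (and depends on $K$), commutating such elements does not drive the escape norm down, and the iteration argument only yields decay once one is already in a neighbourhood where $\|\cdot\|_e$ is small --- that is precisely the content of the ``no small subgroups'' conclusion, not nilpotence of $L$. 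What \cite{bgt-big} actually does is an induction on $\dim L$: at each stage one uses the Gleason estimate to locate an approximately \emph{central} one-parameter direction, pulls it back to a geometric progression, quotients by the associated ``line'' to drop the Lie-model dimension, and recurses. The nilpotence and the $O_K(1)$ bound on the step of the resulting coset nilprogression are outputs of this inductive construction (successive centrality of the peeled-off progressions), not a property of $L$ established up front; the commutator estimate's role is to locate central directions and to control errors in the quotient, not to kill commutators in $L$ outright. Finally, passing from a large nilprogression inside a bounded power of $H^*$ to the exact form of the conclusion (a set $A'\subseteq A$, $|A'|\geq |A|/K'$, with $A'\subseteq P$ and $|P|\leq|A|$) requires the additive-combinatorial covering lemmas the paper alludes to; this is genuine, if standard, work and should not be elided.
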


In fact, the theorem applies to sets with small doubling in \emph{local} groups, which we will not define here (and in fact this generalisation is an important part of the proof). However the dependence of $K'$ on $K$ is not known explicitly at all.   On the other hand, the rank and step of $P$ can be chosen to be at most $6 \log_2 K$; see \cite[Theorem 10.1]{bgt-big}.

We do not have the space to say much about the proof of this theorem here. A crucial theme is a certain ``correspondence principle'', due to Hrushovski \cite{hrush}, linking sets with small doubling (or rather approximate groups, as discussed in the next section) and \emph{locally compact topological groups}. One may then bring into play the extensive theory, largely developed in the 1950s and before,  of locally compact groups in connection with Hilbert's fifth problem. See \cite{bgt-big,tao-book} for considerably more on this. 

\section{Approximate groups}\label{sec4}

This survey is about sets with small doubling constant. However for technical reasons many papers consider a closely-related object called an \emph{approximate group}, first introduced in \cite{tao-noncommutative}.

\begin{definition}
Let $A$ be a finite set in a group $G$. We say that $A$ is a $K$-approximate group if $A$ is symmetric (that is to say if $a \in A$ then $a^{-1} \in A$), contains the identity and if $A \cdot A$ is contained in $X \cdot A$ for some set $X$ of size at most $K$.
\end{definition}

This notion has some technical advantages over the notion of a set with small doubling; for example, it is immediately clear that the image of a $K$-approximate group under a homomorphism is also a $K$-approximate group, but there are examples (even in the abelian case) which show that the doubling constant $\sigma[\pi(A)]$ of a finite set $A$ under a homomorphism $\pi$ is strictly greater than that for $A$; see \cite[Exercise 2.2.10]{tv-book}. By construction, it is clear that any finite $K$-approximate group $A$ has $\sigma[A] \leq K$.  It is also easily seen that one has control over the higher product sets $A^n := A \cdot A \cdots A$, specifically $|A^n| \leq K^{n-1}|A|$, whereas no such bound is generally available for sets of small doubling\footnote{Although one does have $|A^n| \leq K^n|A|$ in the abelian case, a result of Pl\"unnecke and Ruzsa \cite{plunnecke, ruzsa-plunnecke} for which a very elegant proof was recently provided by Petridis \cite{petridis}.}.

Although sets with small doubling need not be approximate groups, we do have the following converse result, obtained in \cite[Theorem 4.6]{tao-noncommutative}. It asserts in some sense that sets of small doubling are essentially ``controlled'' by approximate groups:

\begin{theorem}  Let $A$ be a finite non-empty subset of a multiplicative group $G = (G,\cdot)$ such that $\sigma[A] \leq K$, where $K \geq 2$.  Then one can find a $K^{O(1)}$-approximate group $H$ in $G$ of cardinality $ \ll K^{O(1)} |A|$ such that $A$ can be covered by $K^{O(1)}$ left- or right-translates of $H$.\end{theorem}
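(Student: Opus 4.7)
\emph{Plan.} The strategy is to take $H$ to be a short iterated product set of a symmetrised version of $A$ and to verify the three requirements for an approximate group (symmetry, $1\in H$, and the covering $H\cdot H\subseteq T\cdot H$) using noncommutative Pl\"unnecke--Ruzsa type bounds, together with a single application of the Ruzsa covering lemma. The covering of $A$ by translates of $H$ will then be essentially free, since $A$ will already be contained in $H$.

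\emph{Step 1 (Symmetrise and bound iterated products).} Replace $A$ by $B := A\cup A^{-1}\cup\{1\}$, so that $B=B^{-1}$, $1\in B$, and $|B|\leq 2|A|+1$; a short computation using $|A\cdot A|\leq K|A|$ gives $|B\cdot B|\leq K^{O(1)}|A|$. Iterating the noncommutative Ruzsa triangle inequality $|XZ^{-1}|\,|Y|\leq|XY^{-1}|\,|YZ^{-1}|$, which holds in any group by a direct injectivity argument (cf.\ \cite[Ch.~2]{tv-book}), I would then establish the ``noncommutative Pl\"unnecke--Ruzsa'' bound $|B^{n}|\leq K^{O(n)}|A|$ for every fixed $n\geq 1$. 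This is the main substantive step: unlike in the abelian case, where Pl\"unnecke's graph-theoretic argument yields polynomial dependence on $n$, here only exponential dependence in $n$ is available, but this is irrelevant because $n$ will be an absolute constant.

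\emph{Step 2 (Choose $H$ and apply Ruzsa covering).} Set $H:=B^{4}$. Then $H$ is symmetric, contains $1$, and $A\subseteq B\subseteq H$, so $A$ is covered by a single left-translate of $H$; also $|H|\leq K^{O(1)}|A|$ by Step 1. For the approximate-group property I would invoke the Ruzsa covering lemma: if $|XY|\leq K'|Y|$ then there exists $T$ with $|T|\leq K'$ and $X\subseteq T\cdot YY^{-1}$. Applied with $X:=H\cdot H = B^{8}$ and $Y:=B^{2}$, one has $|XY|/|Y| = |B^{10}|/|B^{2}|\leq K^{O(1)}$, using $|B^{2}|\geq|A|$ together with the Pl\"unnecke--Ruzsa bound $|B^{10}|\leq K^{O(1)}|A|$. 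The lemma then yields $H\cdot H = B^{8}\subseteq T\cdot B^{2}(B^{2})^{-1} = T\cdot B^{4} = T\cdot H$ with $|T|\leq K^{O(1)}$, which is exactly the approximate-group condition.

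\emph{Main obstacle.} The only substantive ingredient is the iterated product bound $|B^{n}|\leq K^{O(n)}|A|$; the Ruzsa covering lemma is a short greedy argument. The delicate point in Step 2 is the choice of exponents. Ruzsa covering produces a cover by translates of $YY^{-1}$, not of $Y$, so one must take $Y=B^{j}$ with $2j\leq 4$ in order that $YY^{-1}\subseteq H$, and simultaneously keep $|X\cdot Y|=|B^{8+j}|$ within the range controlled by the Pl\"unnecke--Ruzsa estimate. The choice $(j,k)=(2,4)$ is the minimal one that makes both constraints compatible.
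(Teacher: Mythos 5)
Your Step~1 contains a fatal gap, and it is exactly the step you flag as ``the main substantive step'': the claimed bound $|B^{n}|\leq K^{O(n)}|A|$ is \emph{false} in a general (nonabelian) group. The Ruzsa triangle inequality $|XZ^{-1}|\,|Y|\leq|XY^{-1}|\,|YZ^{-1}|$ lets you pass between the various \emph{two-fold} products of $A$ and $A^{-1}$ (for instance $|AA^{-1}|\,|A|\leq|AA|\,|A^{-1}A^{-1}|$ gives $|AA^{-1}|\leq K^{2}|A|$), but it never lets you climb from two-fold products to three-fold products: to bound $|AAA|$ you would have to take $X=AA$, and then the right-hand side involves quantities like $|AAY^{-1}|$ which are not controlled for any useful choice of $Y$. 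The abelian Pl\"unnecke--Ruzsa inequality, which does bootstrap from $|A+A|$ to $|nA-mA|$, relies essentially on commutativity (via Pl\"unnecke's graph argument or Petridis's argument); no such statement holds noncommutatively.

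A concrete counterexample: let $H$ be a large finite subgroup of $G$ and let $x\in G$ satisfy $xHx^{-1}\cap H=\{1\}$. Put $A=H\cup\{x\}$. Then $A\cdot A = H\cup Hx\cup xH\cup\{x^{2}\}$ has size about $3|H|$, so $\sigma[A]\leq 3$. But $B=A\cup A^{-1}\cup\{1\}\supseteq H\cup\{x\}$, hence $B^{3}\supseteq HxH$, and the map $(h_1,h_2)\mapsto h_1xh_2$ is injective under the condition $xHx^{-1}\cap H=\{1\}$, so $|B^{3}|\geq|H|^{2}\approx|A|^{2}$. Thus $|B^{3}|$ (and a fortiori $|B^{4}|=|H|$) is not $\ll K^{O(1)}|A|$, and $H:=B^{4}$ is enormous and is not a $K^{O(1)}$-approximate group.

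Note that in this example the theorem itself is still true --- one can take $H$ to be the subgroup $H$ and cover $A$ by the two translates $H$ and $xH$ --- but the witnessing approximate group \emph{cannot} be chosen to contain $A$. This is precisely why the theorem is phrased as ``$A$ can be covered by $K^{O(1)}$ translates of $H$'' rather than ``$A\subseteq H$''. Your construction tries to prove the stronger containment statement, which is false, and consequently the Ruzsa covering step in Step~2 (which is otherwise fine as a standalone lemma) never gets the inputs it needs. The actual argument (following Tao, as the paper indicates) must build the approximate group $H$ by a genuinely different device --- e.g.\ a symmetry-set/pigeonhole construction locating a large ``approximate stabiliser'' inside $A^{-1}A$ --- not by taking product powers of a symmetrised $A$.
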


The arguments used to prove this are fairly elementary, and are non-commutative variants of some arguments of Ruzsa \cite{ruzsa-sumset}. By contrast the majority of the structural results discussed in \S \ref{sec3} rely on considerable machinery. Thus, in a certain sense, one should think of the theory of sets with small doubling and the theory of approximate groups as equivalent.

\section{Quantitative aspects}\label{sec5}

Thus far we have talked mainly about qualitative results concerning sets with small doubling, with the notable exception of Helfgott's work and its successors, where one obtains polynomial dependencies. A particularly acute example of this is Theorem \ref{bgt-theorem}, where no explicit bounds are known at all.

Even in (in fact \emph{especially} in) the abelian case, quantitative issues are very interesting. We mention some of these now, deferring to the excellent recent survey \cite{sanders-survey} for considerably more detail.

For the most part we discuss the quantitative issues related to Ruzsa's Theorem \ref{thm1.2}, concerning sets with small doubling in $\F_2^{\omega}$. This most abelian of all settings has acted as a significant test case for ideas. Theorem \ref{thm1.2} stated that if $A \subset \F_2^{\omega}$ is a finite set with $\sigma[A] \leq K$ then there exists a subgroup $H \leq \F_2^{\omega}$ containing $A$ with $|H| \leq F(K)|A|$. Ruzsa himself obtained the bound $F(K) \leq K^2 2^{K^4}$. This was subsequently refined by Green-Ruzsa \cite{green-ruzsa} and then by Sanders, and after that by Green-Tao \cite{green-tao-f2}, who showed that one can take $F(K) \leq 2^{2K + o(K)}$, a bound which is sharp up to the $o(K)$ term. This was further refined by Konyagin \cite{konyagin}, and finally Chaim Even-Zohar \cite{evan-zohar} obtained the precise value of $F(K)$. The techniques here are those of extremal combinatorics, specifically the technique of \emph{compressions}.

It was already realised by Ruzsa, however, that trying to cover $A$ by a subgroup is an inefficient endeavour. He attributes to Katalin Marton the following question, which has since become known as the \emph{Polynomial Freiman-Ruzsa Conjecture} (PFR).

\begin{conjecture}
Suppose that $A \subseteq \F_2^{\omega}$ is a set with $\sigma[A] \leq K$. Then $A$ is covered by $K^C$ translates of some subspace $H \leq \F_2^{\omega}$ with $|H| \leq |A|$.
\end{conjecture}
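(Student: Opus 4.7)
The plan is to pass from the combinatorial setting to an entropic one. For independent $\F_2^\omega$-valued random variables $X,Y$ of finite Shannon entropy $H$, I would work with the entropic Ruzsa distance
\[
d(X;Y) := H(X+Y) - \tfrac{1}{2}H(X) - \tfrac{1}{2}H(Y).
\]
If $X$ is uniform on $A$ and $Y$ is an independent copy, then entropic analogues of the Pl\"unnecke--Ruzsa inequalities convert the hypothesis $\sigma[A] \leq K$ into $d(X;X) \leq \log_2 K + O(1)$. I would then aim to prove an \emph{entropic} PFR: whenever $d(X;X) \leq \log_2 K$, there exists a subspace $H_0 \leq \F_2^\omega$ and a coset $x+H_0$ such that the Ruzsa distance from $X$ to the uniform distribution on $x+H_0$ is $O(\log K)$ and $\log_2|H_0|$ lies within $O(\log K)$ of $H(X)$. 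Once this is in hand, a Ruzsa covering argument (using Pl\"unnecke--Ruzsa control of iterated sumsets in the abelian case) converts entropic closeness back into a covering of $A$ by $K^{O(1)}$ translates of a subspace of size at most $|A|$, which is the conjecture.

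To prove the entropic statement I would use a variational approach. The key tool is a \emph{fibering identity}: for independent $X_1,X_2,Y_1,Y_2$ one can expand
\[
d(X_1+X_2;\, Y_1+Y_2)
\]
in terms of the $d(X_i;Y_j)$ and certain conditional Ruzsa distances, using the characteristic-$2$ identity $(x_1+x_2)+(y_1+y_2) = (x_1+y_1)+(x_2+y_2)$ together with submodularity of entropy. One then takes an (approximate) minimiser of $X \mapsto d(X;X)$ within the class of $\F_2^\omega$-valued random variables of comparable entropy and applies the fibering identity to self-convolutions of this minimiser. Variational optimality forces an equality case in submodularity, and these equality cases correspond precisely to uniform distributions supported on cosets of subspaces.

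The main obstacle, and the reason the classical approach via Sanders' Bogolyubov--Ruzsa lemma yields only the quasi-polynomial bound $\exp(\log^{O(1)} K)$, is that iterating $L^2$-based combinatorial arguments loses a factor of $\log K$ in the exponent at each step. Entropy, by contrast, is additive under independence, so a well-designed fibering argument has the potential to propagate the bound linearly in $\log K$ rather than multiplicatively. The delicate technical point is a \emph{quantitative} stability statement for the equality case: an approximate minimiser of $d(X;X)$ must be close to a uniform-on-coset distribution, with error \emph{polynomial} in the deficit rather than an error that blows up under iteration. Establishing this polynomial stability of the fibering inequality, and packaging it into the covering statement without losing logarithms, is where the bulk of the difficulty lies.
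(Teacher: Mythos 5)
This statement appears in the paper as a \emph{conjecture} --- the Polynomial Freiman--Ruzsa Conjecture --- and the paper does not offer a proof of it. At the time the survey was written, the conjecture was explicitly open: the text notes that the best known result is due to Sanders \cite{sanders-freiman}, who obtains the conclusion with $K^C$ replaced by the quasi-polynomial quantity $\exp(\log^{4+o(1)} K)$. So there is no ``paper's own proof'' to compare your sketch against.

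That said, your outline is a recognisable (and, at this level of detail, essentially correct) summary of the entropy method that Gowers, Green, Manners and Tao used to actually prove PFR in 2023, well after this survey appeared: the entropic Ruzsa distance $d(X;Y)$, its control from $\sigma[A]\leq K$ via entropic Pl\"unnecke--Ruzsa, the fibering identity built on $(x_1+x_2)+(y_1+y_2)=(x_1+y_1)+(x_2+y_2)$ and submodularity, a variational argument over distributions, and the characterisation of the extremisers of $d(X;X)$ as uniform on cosets. Your diagnosis of \emph{why} this succeeds where the Sanders/Schoen route did not --- additivity of entropy under independence avoiding the multiplicative logarithmic loss at each iteration --- is also the correct high-level explanation. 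Two caveats, because the proposal is only a programme rather than a proof. First, ``variational optimality forces an equality case in submodularity'' elides the key technical device: one does not minimise $d(X;X)$ directly, but rather a penalised functional of the form $\tau(X_1,X_2)=d(X_1;X_2)+\eta\, d(X_1;X_1^0)+\eta\, d(X_2;X_2^0)$ for a suitable small $\eta$, and it is minimality of $\tau$ (not approximate minimality of $d$) that yields the clean equality case, with the stability/perturbation issue you flag as ``the bulk of the difficulty'' entirely absorbed by this compactness-plus-penalisation step. Second, the passage from the entropic conclusion back to a genuine covering of $A$ by $K^{O(1)}$ translates of a subspace of size at most $|A|$ requires a Ruzsa-covering argument plus a size-normalisation of $H$, which you gesture at but do not carry out. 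Neither caveat is a gap in the sense of a wrong idea; but as written the proposal is a route map to a theorem the survey itself regarded as open, not a proof.
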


At present this conjecture is unresolved, although there has been spectacular recent progress by Sanders \cite{sanders-freiman}, building on work of Schoen \cite{schoen}. Sanders shows that this conjecture does hold with $K^C$ replaced by $\exp (\log^{4 + o(1)} K)$. 
Ruzsa formulated a number of equivalent statements to this conjecture, which were written up in \cite{green-fin-field}. Perhaps the most attractive is the following statement concerning \emph{almost homomorphisms}.

\begin{conjecture}\label{pfr-2}
Suppose that $f : \F_2^n \rightarrow \F_2^{n'}$ is a map with the property that $f(x+y) - f(x) - f(y) \in S$ for all $x,y$, where $S$ is some set of size $K$. Is it true that $f = \tilde f + g$, where $\tilde f : \F_2^n \rightarrow \F_2^{n'}$ is linear, and $|\mbox{\emph{im}} (g)| \leq K^C$?
\end{conjecture}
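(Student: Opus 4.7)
The plan is to prove that Conjecture~\ref{pfr-2} is \emph{equivalent} to the Polynomial Freiman--Ruzsa conjecture (PFR), so that any quantitative version of PFR transfers to it; in particular, combining the sketch below with Sanders' theorem yields the conclusion of the conjecture with $K^C$ replaced by $\exp(\log^{4+o(1)}K)$.

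For the main direction, PFR~$\Rightarrow$~Conjecture~\ref{pfr-2}, I would work with the graph $A := \{(x,f(x)) : x \in \F_2^n\} \subseteq \F_2^n \times \F_2^{n'}$. Since $f(x)+f(y) = f(x+y) + s$ for some $s \in S$, adding two graph points yields $(x+y,\, f(x+y)+s)$, so
\[ A + A \subseteq A + (\{0\} \times S). \]
This gives $\sigma[A] \leq |S| \leq K$ and $|A| = 2^n$. Applying PFR produces a subspace $V \leq \F_2^n \times \F_2^{n'}$ with $|V| \leq 2^n$ and a covering $A \subseteq \bigcup_{i=1}^m (t_i + V)$ with $m \leq K^{O(1)}$.

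The extraction of $\tilde f$ then proceeds as follows. Let $B_i := \pi_1(A \cap (t_i + V))$, where $\pi_1$ denotes the first projection; these sets cover $\F_2^n$. Pigeonholing the largest $B_i$ gives $|\pi_1(V)| \geq 2^n/m$, whence $[\F_2^n : \pi_1(V)] \leq m$ and dually $|V_0| := |V \cap (\{0\} \times \F_2^{n'})| = |V|/|\pi_1(V)| \leq m$. I would then choose a linear section of the quotient map $V \to \pi_1(V)$, yielding a linear $\tilde f_0 : \pi_1(V) \to \F_2^{n'}$ whose graph lies in $V$, and extend by zero on a linear complement of $\pi_1(V)$ to obtain a global linear $\tilde f : \F_2^n \to \F_2^{n'}$. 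A direct calculation shows that for $x \in B_i$, the value $g(x) := f(x) - \tilde f(x)$ equals $c_i + w$ with $c_i := (t_i)_2 - \tilde f((t_i)_1)$ depending only on $i$, and $w \in V_0$. Hence $|\im(g)| \leq m \cdot |V_0| \leq K^{O(1)}$, which is exactly the desired conclusion. The reverse implication (Conjecture~\ref{pfr-2}~$\Rightarrow$~PFR) is the routine direction and is already recorded in \cite{green-fin-field}: an arbitrary set of small doubling can be put in bijection with the image of a suitable almost homomorphism, and the few-cosets conclusion then covers the original set.

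The main obstacle is not in the equivalence itself, which is elementary and robust, but in obtaining genuinely polynomial bounds in $K$: both conjectures remain open at the polynomial level, and the best unconditional bounds (Sanders') fall just short of polynomial. A truly polynomial bound in Conjecture~\ref{pfr-2} would in particular give a polynomial version of PFR, and appears to require a new idea beyond iterated Bogolyubov-type Fourier arguments.
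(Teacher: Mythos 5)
This item is a \emph{conjecture}, not a theorem: the paper offers no proof of Conjecture~\ref{pfr-2}, and indeed no proof is known. What the paper does say is that Ruzsa formulated several statements equivalent to the Polynomial Freiman--Ruzsa conjecture (citing \cite{green-fin-field}), that Conjecture~\ref{pfr-2} is one of them, and that Sanders's theorem delivers the conclusion with $K^C$ replaced by $\exp(\log^{4+o(1)}K)$. So there is no ``paper's own proof'' to compare against; the most the paper commits to is the equivalence itself, stated without detail.

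You have correctly recognised this and, rather than claiming to prove the conjecture, have supplied the content behind the equivalence claim, chiefly the direction PFR~$\Rightarrow$~Conjecture~\ref{pfr-2}. Your argument is sound: the graph $A=\{(x,f(x))\}$ has $A+A\subseteq A+(\{0\}\times S)$, hence $\sigma[A]\leq K$; applying PFR (after intersecting the resulting subspace with $\F_2^n\times\F_2^{n'}$, a harmless adjustment) gives a covering by $m\leq K^{O(1)}$ translates of a subspace $V$ with $|V|\leq 2^n$; pigeonholing shows $[\F_2^n:\pi_1(V)]\leq m$ and hence $|V_0|=|V|/|\pi_1(V)|\leq m$; and a linear section of $\pi_1|_V$ extended by zero on a complement of $\pi_1(V)$ produces a linear $\tilde f$ with $g=f-\tilde f$ taking values in the union of $m$ cosets of $V_0$, so $|\im(g)|\leq m|V_0|\leq K^{O(1)}$. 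Combined with Sanders's bound in place of PFR this recovers exactly the $\exp(\log^{4+o(1)}K)$ quantitative statement the paper records. The one caveat worth flagging is that this does not prove the conjecture, as you yourself note, and that the reverse implication --- which is part of what makes it an \emph{equivalence} rather than a one-way deduction --- is dispatched here only by citation; that is consistent with the paper, which likewise defers to \cite{green-fin-field} for the equivalence.
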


This statement is trivial with $K^C$ replaced by $2^K$ (define $\tilde f$ to equal $f$ on a basis of $\F_2^n$). Sanders's result establishes that it holds with $\exp (\log^{4 + o(1)} K)$.

Let us turn now to the structure of sets with small doubling in Euclidean spaces, and in particular\footnote{In fact the theory of small doubling in $\R^d$ and that in $\Z$ are essentially equivalent, since any subset of $\R^d$ is \emph{Freiman isomorphic} to a set of integers.} in $\Z$. In this setting extra tools coming from geometry can be brought into play. In particular we have the following result \cite[Lemma 1.13]{freiman-book}, known as Freiman's lemma.

\begin{proposition}[Freiman's lemma]\label{prop5.3}
Suppose that $A$ is a finite subset of some Euclidean space $\R^d$, and that $\sigma[A] \leq K$. Suppose that $A$ is not contained in any proper affine subspace of $\R^d$. Then
\[ d \leq K - 1 + \frac{d(d+1)}{2|A|}.\]In particular, if $|A| \gg_{\eps, d} 1$ then $d \leq K - 1 + \eps$.
\end{proposition}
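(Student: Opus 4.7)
The stated inequality is equivalent, after clearing the denominator $2|A|$ and rearranging, to the discrete bound
$$|A+A| \;\geq\; (d+1)|A| \;-\; \binom{d+1}{2},$$
from which the ``in particular'' claim is immediate by taking $|A|$ large enough that $\tfrac{d(d+1)}{2|A|} < \eps$. I would aim to prove this sharper inequality directly by induction on $n := |A|$, with $d$ held fixed.

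The base case $n = d+1$ is forced: an affinely spanning set of $d+1$ points in $\R^d$ must be affinely independent, and a direct enumeration (writing $A = \{p_0,\ldots,p_d\}$ and using linear independence of the differences $p_i - p_0$) shows that the $\binom{d+2}{2}$ sums $p_i + p_j$ with $0 \leq i \leq j \leq d$ are pairwise distinct, giving $|A+A| = \binom{d+2}{2} = (d+1)^2 - \binom{d+1}{2}$ --- matching the desired bound with equality.

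For the inductive step with $n \geq d+2$, my plan is to locate an element $a^* \in A$ such that \textbf{(i)} $A' := A \setminus \{a^*\}$ still affinely spans $\R^d$, and \textbf{(ii)} there exist at least $d+1$ sums in $A+A$ that fail to lie in $A' + A'$. Given these, the inductive hypothesis on $A'$ gives
$$|A+A| \;\geq\; |A'+A'| + (d+1) \;\geq\; (d+1)(n-1) - \binom{d+1}{2} + (d+1) \;=\; (d+1)n - \binom{d+1}{2}.$$
Condition (i) can always be met when $n \geq d+2$, because $A$ then contains strictly more points than any affine basis, so some point is removable without collapsing the span.

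The real content --- and the main obstacle --- is (ii). My strategy is to choose a linear functional $\phi: \R^d \to \R$ in sufficiently general position: injective on $A$, and adapted so that the top $d+1$ $\phi$-values are attained on an affinely independent subset $\{a^* = v_n, v_{n-1}, \ldots, v_{n-d}\}$ (which is achievable for generic $\phi$ since $A$ affinely spans). The natural $d+1$ candidate new sums are then $a^* + v_{n-j}$ for $j = 0, 1, \ldots, d$; these are obviously pairwise distinct elements of $A + A$. For $j = 0, 1$ the simple bound $\phi(a^* + v_{n-j}) > 2\phi(v_{n-1}) = \max \phi(A' + A')$ immediately excludes them from $A' + A'$, but for $j \geq 2$ the pure $\phi$-value comparison may fail, and a more refined argument is needed.

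I expect this refinement to be the delicate core of the proof. The idea is to leverage the affine independence of $\{v_n, \ldots, v_{n-d}\}$: after translating so that $a^* = 0$, the vectors $v_{n-1}, \ldots, v_{n-d}$ become a linear basis of $\R^d$, and any hypothetical equation $v_{n-j} = u + w$ with $u, w \in A'$ translates to a rigid linear-coordinate identity. By choosing $\phi$ lexicographically with respect to this basis --- so that $\phi$ ``sees'' the basis coordinates in hierarchical order rather than as a single flat linear combination --- one should be able to force the $\phi$-values of any such $u, w$ to exceed $\phi(v_{n-1})$, contradicting $u, w \in A'$. Making this lexicographic refinement rigorous, and ruling out corner cases where $A$ has unexpected additive coincidences among the lower-$\phi$ elements, is where I anticipate the bulk of the technical work; once that is in hand the induction closes.
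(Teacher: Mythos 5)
Your reformulation as $|A+A| \geq (d+1)|A| - \binom{d+1}{2}$, the base case $n=d+1$, and the overall inductive skeleton are all sound, and choosing $a^*$ as a $\phi$-maximum for a linear functional $\phi$ is exactly the ``crucial use of convexity'' the paper alludes to (it says $a^*$ is a vertex of $\mathrm{conv}(A)$). But both of the steps you flag as delicate have real problems. The assertion that the top $d+1$ points under a generic $\phi$ are affinely independent is false: for $A=\{(0,0),(0,-1),(0,-2),(1,-3)\}\subset\R^2$ and $\phi(x,y)=y+\eps x$ with $\eps$ small, the top three points are the collinear $(0,0),(0,-1),(0,-2)$, and this persists on an open set of functionals. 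And the lexicographic idea is at odds with your earlier choice of $\phi$: a lexicographic functional adapted to the basis $v_{n-1},\ldots,v_{n-d}$ assigns $0$ to $a^*$ and nonzero values of one fixed sign to the basis vectors, which is incompatible with $a^*$ being the $\phi$-maximum over $A$, and in any case the conclusion you want it to produce, $\phi(u),\phi(w)>\phi(v_{n-1})$, is simply not true in general.

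What is missing is both simpler and weaker. Replace ``the top $d+1$ points'' by a \emph{greedy} affine basis taken in decreasing $\phi$-order: set $v_{j_0}=a^*$, and let $v_{j_k}$ be the $\phi$-largest element of $A$ outside $\mathrm{aff}\{v_{j_0},\ldots,v_{j_{k-1}}\}$. By construction every $a\in A$ with $\phi(a)>\phi(v_{j_k})$ lies in $\mathrm{aff}\{v_{j_0},\ldots,v_{j_{k-1}}\}$. Translate so $a^*=0$, giving $\phi<0$ on $A'$. If $v_{j_k}=u+w$ with $u,w\in A'$, then $\phi(u)+\phi(w)=\phi(v_{j_k})$ with all three quantities negative, so $\phi(u),\phi(w)>\phi(v_{j_k})$; this sign observation is the whole trick, and no lexicographic refinement is needed. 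Hence $u,w\in\mathrm{aff}\{v_{j_0},\ldots,v_{j_{k-1}}\}$, and since $a^*=0$ lies there too, the affine combination $v_{j_k}=u+w-a^*$ does as well, contradicting the greedy choice of $v_{j_k}$. Together with $2a^*\notin A'+A'$ this gives $d+1$ elements $a^*+v_{j_0},\ldots,a^*+v_{j_d}$ of $A+A$ outside $A'+A'$, closing the induction. You also need $a^*$ simultaneously to be a vertex of $\mathrm{conv}(A)$ and removable without collapsing the affine span; your remark that ``some point is removable'' does not give a vertex, though a short separate argument shows a removable vertex exists whenever $|A|\geq d+2$. The paper itself cites \cite{green-edinburgh} for the proof without reproducing it; the argument there is likewise a short convexity argument, of which the above repair of your plan is a close relative.
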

The proof of this result is very short, but makes crucial use of convexity. A writeup may be found in, for example, \cite{green-edinburgh}.  A follow-up to this is the next result, also originally due to Freiman \cite{freiman-book}, with a simplified proof given subsequently by Bilu \cite{bilu}.

\begin{proposition}[Freiman-Bilu lemma]\label{prop5.4}
Suppose that $A$ is a finite subset of some Euclidean space $\R^d$, and that $\sigma[A] \leq K$. Then there is a subset $A' \subseteq A$, $|A'| \gg_{\eps,K} |A|$, which is contained in an affine subspace of $\R^d$ of dimension at most $\log_2 K + \eps$.
\end{proposition}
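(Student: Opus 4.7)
My plan is to view this as a discrete refinement of the continuous Brunn--Minkowski inequality of Proposition \ref{bm}, which gives exactly $d \leq \log_2 K$ for open sets; the $\eps$ slack in the exponent and the implicit density constant $c_{\eps,K}$ are precisely the price paid for passing to the finite setting. After translating so that $0 \in A$ and replacing $\R^d$ by the affine span of $A$, it suffices to prove the following by induction on $d$: if $A$ affinely spans $\R^d$, has $\sigma[A] \leq K$, and $d > \log_2 K + \eps$, then $A$ contains a subset of density $\gg_{\eps,K} 1$ lying in a proper affine subspace. Iterating this $O_{\eps,K}(1)$ times, and multiplying the density losses, yields the subset $A'$ we want.

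For the inductive step I would play two competing estimates on the iterated sumset $nA$ against each other. On the one hand, the Pl\"unnecke--Ruzsa inequality in abelian groups gives $|nA| \leq K^n |A|$ for every $n \geq 1$. On the other hand, picking $v_1,\dots,v_d \in A$ linearly independent (possible since $0 \in A$ and $A$ affinely spans), one has
\[
\{c_1 v_1 + \cdots + c_d v_d : c_i \in \Z_{\geq 0},\ c_1 + \cdots + c_d \leq n\} \subseteq nA,
\]
contributing $\binom{n+d}{d}$ distinct points on the left. Comparing these bounds already forces a constraint of the shape $d = O(\log K)$, recovering a variant of Freiman's lemma (Proposition \ref{prop5.3}), but with a constant that is quantitatively much weaker than $\log_2 K$.

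To sharpen the constant, one passes to a large subset on which the discrete set looks essentially like a convex body, so that Proposition \ref{bm} can be invoked with almost no loss. Bilu's strategy is to slice $A$ by the level sets of a well-chosen linear functional $\langle \cdot, v\rangle$, apply Theorem \ref{thm1.1} to the one-dimensional projection $\pi_v(A)$ (which still has doubling at most $K$), and run a dichotomy: either many slices concentrate at nearby levels, in which case a positive-density subset of $A$ lives inside a single hyperplane and we are done by induction, or $\pi_v(A)$ looks like a long arithmetic progression, in which case a rigidity argument forces a dense portion of $A$ to fill out a translate of its own convex hull on a lattice scale, at which point the continuous Brunn--Minkowski bound of Proposition \ref{bm} applies directly.

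The hardest part, in my view, is precisely this last dichotomy: extracting enough convexity from the purely combinatorial hypothesis $\sigma[A] \leq K$ to make Proposition \ref{bm} applicable with loss only $\eps$ in the exponent, rather than some larger absolute constant. This is where Fourier-analytic techniques (or, alternatively, a careful Freiman model via Theorem \ref{ftag} followed by a geometric rearrangement) must intervene, and where tracking how the density constant $c_{\eps,K}$ degrades through the $O_{\eps,K}(1)$ iterations constitutes the main technical bookkeeping.
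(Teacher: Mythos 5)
The paper does not prove Proposition \ref{prop5.4} (it defers to Freiman, Bilu and \cite{green-tao-freiman-bilu}), so your argument must stand on its own, and there are genuine gaps. The most serious is in the second paragraph: the comparison $\binom{n+d}{d} \leq |nA| \leq K^n |A|$ does \emph{not} ``force a constraint of the shape $d = O(\log K)$''. For $n$ large the left-hand side grows only polynomially in $n$ (of degree $d$) while the right-hand side grows exponentially, and for small $n$ the factor $|A|$ is unbounded; no choice of $n$ extracts from these two inequalities a bound on $d$ that is independent of $|A|$. Indeed $|nA|$ is eventually a polynomial in $n$ of degree equal to the dimension of the affine span of $A$, which is perfectly compatible with $|nA| \leq K^n|A|$ for every $d$. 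In particular this comparison does not recover any variant of Freiman's lemma: Proposition \ref{prop5.3} is proved by a convexity and multiplicity argument inside $A+A$, not by a Pl\"unnecke iteration.

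Two further problems. First, the proposed induction on $d$ degrades the doubling constant: if $A' \subseteq A$ has $|A'| \geq c|A|$ then one only has $\sigma[A'] \leq |A+A|/|A'| \leq K/c$, so after one step the hypothesis $\sigma[A]\leq K$ is lost, and after $O_{\eps,K}(1)$ steps---each paying a density factor that itself depends on the \emph{inflated} doubling constant---the accounting can spiral; you do not address this. Second, the dichotomy in the third paragraph, in which either slices concentrate or a ``rigidity argument forces a dense portion of $A$ to fill out a translate of its own convex hull on a lattice scale'', is precisely where all the substance of Bilu's and Green--Tao's proofs lives, and it is left entirely unsubstantiated; it is far from clear how a long arithmetic progression in one projection forces such two-sided convexity of $A$. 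The known arguments proceed differently: first invoke Freiman's theorem (Theorem \ref{thm1.1}) to place $A$ inside a \emph{proper} generalised progression $P$ of rank $O_K(1)$ and size $\ll_K |A|$, regard $A$ as a dense subset of a box in $\Z^r$, and only then run the geometric argument inside the box (Green--Tao use down-set compressions followed by a discrete Brunn--Minkowski inequality). The $\log_2 K$ in the exponent emerges from that convexity step; the Pl\"unnecke inequality plays no role.
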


Moderately good dependencies in this theorem are now known \cite{green-tao-freiman-bilu}.
There is a version of the Polynomial Freiman-Ruzsa conjecture for subsets of $\R^d$, a question closely related to that of finding the correct dependencies on $K$ in Proposition \ref{prop5.4}. We are not certain that this has been stated in the literature before.

\begin{conjecture}\label{pfr-z}
Suppose that $A$ is a finite subset of some Euclidean space $\R^d$, and that $\sigma[A] \leq K$. Then $A$ can be covered by $K^C$ translates of some generalised progression $P = P(u_1,\dots, u_r; N_1,\dots, N_r)$ with $r = O(\log K)$.
\end{conjecture}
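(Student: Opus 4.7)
The plan is to reduce the conjecture to the standard Polynomial Freiman--Ruzsa conjecture over $\Z$, using Proposition~\ref{prop5.4} to compress the ambient dimension and then a Ruzsa covering argument to lift from a dense subset back to $A$. By the footnoted observation that any finite subset of $\R^d$ is Freiman-isomorphic to a set of integers, we may freely identify $A$ with an integer set whenever convenient.

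First, apply Proposition~\ref{prop5.4} (Freiman--Bilu) with some small $\eps>0$ to produce $A' \subseteq A$ with $|A'| \gg_{\eps,K} |A|$ contained in an affine subspace $V \subseteq \R^d$ of dimension $r \leq \log_2 K + \eps$. Since $A'$ is a positive proportion of $A$ (the implicit constant depending only on $K$), the Pl\"unnecke--Ruzsa inequalities yield $\sigma[A'] \leq K^{O(1)}$. After an affine change of coordinates on $V$ and a Freiman isomorphism, we may view $A'$ as a finite subset of $\Z^r$ with $r = O(\log K)$ and doubling $K^{O(1)}$.

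Second, invoke PFR inside $\Z^r$: the (conjectural) statement provides a generalised progression $P$ of rank $O(\log K)$ such that $A'$ is covered by $K^{O(1)}$ translates of $P$. To lift this covering from $A'$ back to $A$, apply Ruzsa's covering lemma: since $|A+A'| \leq K^{O(1)} |A'|$, there is $X \subseteq A$ with $|X| \leq K^{O(1)}$ and $A \subseteq X + (A' - A')$. Each translate of $A'$ is contained in $K^{O(1)}$ translates of $P$, so $A$ is covered by $K^{O(1)}$ translates of $P - P$, which is itself a generalised progression of rank $O(\log K)$.

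The main obstacle is that the PFR statement invoked in the second step is precisely the open Polynomial Freiman--Ruzsa conjecture; indeed, Conjecture~\ref{pfr-z} specialised to $d=1$ is essentially PFR over $\Z$, so an unconditional proof of the former would resolve the latter. Substituting Sanders' theorem \cite{sanders-freiman} into Step~2 yields only a progression of rank $\exp(\log^{4+o(1)} K)$, which is quasipolynomial in $\log K$ but does not reach $O(\log K)$. Any unconditional route to the desired bound that avoids PFR would have to exploit the real-geometric structure of $\R^d$ (convexity, lattice-point counts, Brunn--Minkowski) considerably more sharply than Freiman's lemma (Proposition~\ref{prop5.3}), which after all only yields a linear dependence on $K$; it is this gap between the convexity-driven linear bound and the conjectured logarithmic one that any new idea would have to close.
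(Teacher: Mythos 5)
This statement is a \emph{conjecture}; the paper offers no proof, and none is known. You correctly recognise this, and your closing paragraph accurately situates it as a real-variable form of the Polynomial Freiman--Ruzsa conjecture, with Sanders' theorem falling short of the desired polynomial bound. There is therefore no proof in the paper to compare against, and your refusal to claim an unconditional argument is the right call.

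That said, the reduction you sketch contains a concrete gap, introduced by an unnecessary detour. Proposition \ref{prop5.4} gives $|A'| \gg_{\eps,K} |A|$, but the dependence of that implicit constant on $K$ is not known to be polynomial: the Freiman--Bilu dimension reduction, as currently proved, costs a super-polynomial density factor. Your step from ``the implicit constant depending only on $K$'' to ``$\sigma[A'] \leq K^{O(1)}$'' is therefore unjustified; all one gets is $\sigma[A'] \leq K/c_K$ with $c_K$ far smaller than $K^{-O(1)}$, and this uncontrolled loss then corrupts both the PFR input (which now sees doubling $K/c_K$, so the resulting rank is $O(\log(K/c_K))$, not $O(\log K)$) and the Ruzsa covering bound on $|X|$. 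In effect you have reduced Conjecture \ref{pfr-z} to PFR \emph{plus} a polynomial-loss Freiman--Bilu theorem, and the latter is itself open and, as the paper remarks just before stating the conjecture, closely bound up with the very question at hand. The detour is also avoidable: by the footnote near Conjecture \ref{pfr-z}, any finite $A \subseteq \R^d$ is Freiman-isomorphic to a set of integers, so one may apply PFR over $\Z$ directly to the image of $A$ and transport the resulting progression back, obtaining $K^{O(1)}$ translates of a rank-$O(\log K)$ progression with no dimension-reduction step and hence no Freiman--Bilu loss. That, together with your observation that the $d=1$ case of Conjecture \ref{pfr-z} is PFR over $\Z$, is the clean two-way equivalence the paper is gesturing at; your version proves the forward implication only under an additional unproved hypothesis.
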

For a slightly more cautious conjecture one might take, instead of the ``box-like'' generalised progression $P$, a set obtained from the set of lattice points in a convex body in $\R^r$. Whether this is really a more general statement seems slightly unclear and perhaps deserves to be clarified (it is an issue in the geometry of numbers).

Much work has been done on quantitative results in $\Z$, starting with Ruzsa's work and the important paper of Chang \cite{chang-freiman}. A comprehensive history and summary of results may be found in Sanders \cite{sanders-survey}.

Green and Tao \cite{green-tao-equivalence} and independently Lovett \cite{lovett}, building on work of Gowers \cite{gowers-ap4}, demonstrated a fairly tight equivalence between Conjectures \ref{pfr-2} and \ref{pfr-z} and quantitative versions of the inverse conjectures for the Gowers $U^3$-norm. This has yet to reveal itself a viable way to attack these Conjectures \ref{pfr-2} and \ref{pfr-z}, since the only known strategies for proving the inverse conjectures for the $U^3$-norm either \emph{use} results about approximate subgroups of $\Z$, or else are essentially qualitative in nature.

Almost no work has been done on quantitative questions in general groups. It may be, for all we know, that Theorem \ref{bgt-theorem} holds with rather good quantitative dependencies.

\section{Applications and Open Questions}\label{sec6}

We conclude this survey by briefly mentioning some applications of the theory of sets with small doubling (or, usually more accurately, the theory of approximate groups), in various contexts. We encourage the reader to look for more.\vspace{11pt}

\emph{Expanders.} Helfgott's paper \cite{helfgott-sl2} was soon followed by an application due to Bourgain and Gamburd \cite{bourgain-gamburd} concerning the construction of \emph{expanders}. We offer a very brief discussion: for more details, see \cite{lubotzky-survey,tao-expander}. In particular Bourgain and Gamburd's results have now been very substantially generalised, culminating in an almost final result of Varj\'u \cite{varju} and Salehi-Golsefidy and Varj\'u \cite{salehi-varju}.

For the purposes of this brief discussion an expander graph is a $2k$-regular graph $\Gamma$ on $n$ vertices for which there is a constant $c > 0$ such that for any set $X$ of at most $n/2$ vertices of $\Gamma$, the number of vertices outside $X$ which are adjacent to $X$ is at least $c|X|$.  Expander graphs share many of the properties of random regular graphs, and this is an important reason why they are of great interest in theoretical computer science (and would have been of interest to Paul Erd\H{o}s, one imagines). There are many excellent articles on expander graphs ranging from the very concise \cite{sarnak-expander} to the seriously comprehensive \cite{hlw, lubotzky-survey}.

A key issue is that of constructing explicit expander graphs, and in particular that of constructing \emph{families} of expanders in which $k$ and $c$ are fixed but the number $n$ of vertices tends to infinity. Many constructions have been given, and several of them arise from Cayley graphs. Let $G$ be a finite group and suppose that $S = \{g_1^{\pm 1},\dots,g_k^{\pm 1}\}$ is a symmetric set of generators for $G$. The Cayley graph $\mathcal{C}(G,S)$ is the $2k$-regular graph on vertex set $G$ in which vertices $x$ and $y$ are joined if and only if $x y^{-1} \in S$. Such graphs provided some of the earliest examples of expanders \cite{lps,margulis-expander}. A natural way to obtain a family of such graphs is to take some large ``mother'' group $\tilde G$ admitting many homomorphisms $\pi$ from $\tilde G$ to finite groups, a set $\tilde S \subseteq \tilde G$, and then to consider the family of Cayley graphs $\mathcal{C}(\pi(\tilde G),\pi(\tilde S))$ as $\pi$ ranges over a family of homomorphisms. The work under discussion concerns the case $\tilde G = \SL_2(\Z)$, which of course admits homomorphisms $\pi_p : \SL_2(\Z) \rightarrow \SL_2(\F_p)$ for each prime $p$. For certain sets $\tilde S \subseteq \tilde G$, for example
\[ \tilde S = \left\{ \begin{pmatrix} 1 & 1 \\ 0 & 1\end{pmatrix}^{\pm 1}, \begin{pmatrix} 1 & 0 \\ 1 & 1\end{pmatrix}^{\pm 1}\right\}\] or
\[ \tilde S = \left\{ \begin{pmatrix} 1 & 2 \\ 0 & 1\end{pmatrix}^{\pm 1}, \begin{pmatrix} 1 & 0 \\ 2 & 1\end{pmatrix}^{\pm 1}\right\},\]
spectral methods from the theory of automorphic forms may be used to show that $(\mathcal{C}(\pi_p(\tilde G),\pi_p(\tilde S)))_{\mbox{\scriptsize $p$ prime}}$ is a family of expanders. See \cite{lubotzky-survey} and the references therein. These methods depend on the fact that the group $\langle \tilde S \rangle$ has finite index in $\tilde G = \SL_2(\Z)$ and they fail when this is not the case, for example when \begin{equation}\label{123} \tilde S = \left\{ \begin{pmatrix} 1 & 3 \\ 0 & 1\end{pmatrix}^{\pm 1}, \begin{pmatrix} 1 & 0 \\ 3 & 1\end{pmatrix}^{\pm 1}\right\}.\end{equation} In \cite{lubotzky-123question} Lubotzky asked whether the corresponding Cayley graphs in this and other cases might nonetheless form a family of expanders, the particular case of \eqref{123} being known as his ``1-2-3 question''. The paper of Bourgain and Gamburd under discussion answers this quite comprehensively.

\begin{theorem}[Bourgain--Gamburd]\label{bg-theorem}
Let $\tilde G = \SL_2(\Z)$ as above and suppose that $\tilde S$ is a finite symmetric set generating a free subgroup of $\SL_2(\Z)$. Then $(\mathcal{C}(\pi_p(\tilde G),\pi_p(\tilde S)))_{\mbox{\scriptsize $p$ \textup{prime}}}$ is a family of expanders.
\end{theorem}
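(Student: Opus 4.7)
The plan is to carry out the \emph{Bourgain-Gamburd machine}, which combines three ingredients: initial non-concentration of the random walk in proper subgroups, Helfgott's product theorem (stated just above in the excerpt) applied iteratively as a flattening lemma, and the quasi-randomness of $\SL_2(\F_p)$ via Frobenius's bound that every non-trivial irreducible representation has dimension at least $(p-1)/2$. Let $G_p := \SL_2(\F_p)$, let $\mu$ be the uniform probability measure on $\pi_p(\tilde S)$, and let $\mu^{(\ell)}$ denote its $\ell$-fold convolution. Expansion of the family $(\mathcal{C}(G_p, \pi_p(\tilde S)))_p$ is equivalent, up to constants, to showing $\|\mu^{(\ell)} - u_{G_p}\|_{\ell^2(G_p)} \leq |G_p|^{-1/2}$ for some $\ell = O(\log p)$, uniformly in $p$, where $u_{G_p}$ is the uniform measure on $G_p$.

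First, I would establish a non-concentration statement at scale $\ell_0 = c_0 \log p$: there exist $\alpha, \delta > 0$ independent of $p$ with $\|\mu^{(\ell_0)}\|_{\ell^2}^2 \leq p^{-\alpha}$ and $\mu^{(\ell_0)}(gH) \leq p^{-\delta}$ for every proper subgroup $H < G_p$ and every $g$. The initial $\ell^2$ decay combines Kesten's bound on return probabilities for simple random walks on non-abelian free groups with the fact that $\pi_p$ is injective on the ball of radius $c_0 \log p$ in $\langle \tilde S\rangle$, since entries of a length-$\ell$ word in $\tilde S$ have size $e^{O(\ell)}$ and hence are non-zero modulo $p$ when $\ell \ll \log p$. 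Non-concentration in proper subgroups uses the classification of maximal proper subgroups of $\SL_2(\F_p)$ (Borel subgroups, normalizers of split and non-split tori, and the exceptional subgroups $A_4, S_4, A_5$) combined with Diophantine input showing that for the fixed free $\tilde S$, only an inverse-polynomially small fraction of length-$\ell_0$ words can project into any fixed algebraic subgroup after reduction modulo $p$.

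Next, I would iterate the \emph{flattening lemma}: if $\mu^{(\ell)}$ satisfies the non-concentration bound from Step~1 and $\|\mu^{(\ell)}\|_{\ell^2}^2 \geq |G_p|^{-1+\eta}$ for some $\eta > 0$, then $\|\mu^{(2\ell)}\|_{\ell^2}^2 \leq p^{-c_1}\|\mu^{(\ell)}\|_{\ell^2}^2$. The argument is by contradiction via the non-commutative Balog-Szemer\'edi-Gowers lemma: a near-equality $\|\mu * \mu\|_{\ell^2} \approx \|\mu\|_{\ell^2}$ forces a dyadic level set of $\mu$ to yield a large subset $A$ with $|A \cdot A| \leq K|A|$ for $K = p^{O(\eta)}$, and Helfgott's theorem then says either $|A| \geq K^{-C}|G_p|$, contradicting the $\ell^2$ lower bound on $\mu$, or $A$ sits in a proper subgroup, contradicting non-concentration. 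Iterating this $O(\log p / c_1)$ times, and verifying at each stage that non-concentration is preserved (convolution only spreads mass across cosets), yields $\|\mu^{(\ell_1)}\|_{\ell^2}^2 \leq |G_p|^{-1+o(1)}$ for some $\ell_1 = O(\log p)$.

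Finally, to pass from near-equidistribution to a genuine spectral gap, I would apply Frobenius quasi-randomness in the form $\lambda_2(\mu)^{2\ell_1} \leq |G_p| \cdot \|\mu^{(\ell_1)} - u_{G_p}\|_{\ell^2}^2 / D_{\min}(G_p)$, where $D_{\min}(G_p) \geq (p-1)/2 \geq |G_p|^{1/3}$. The previous step gives $|G_p|\cdot \|\mu^{(\ell_1)} - u_{G_p}\|_{\ell^2}^2 \leq |G_p|^{o(1)}$, so $\lambda_2(\mu)^{2\ell_1} \leq |G_p|^{-1/3+o(1)}$, and hence $\lambda_2(\mu) \leq e^{-c}$ for some $c > 0$ independent of $p$, which is the desired spectral gap. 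The main obstacle, in my view, is Step~1: it is the unique place where the free-group hypothesis on $\tilde S$ enters essentially, and it requires combining combinatorial random-walk bounds on $\langle\tilde S\rangle$ with arithmetic/Diophantine input on matrix entries, together with a careful case analysis over the classification of proper subgroups of $\SL_2(\F_p)$. By contrast, flattening and quasi-randomness are relatively mechanical once Helfgott's theorem is granted, although one must also check, not quite trivially, that the non-concentration property survives through the entire flattening iteration.
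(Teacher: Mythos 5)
Your proposal correctly reconstructs the Bourgain--Gamburd argument via the same three-stage structure the paper sketches: early-stage $\ell^2$ decay from Kesten's bound on the free group plus injectivity of $\pi_p$ on short balls together with non-concentration in proper subgroups, middle-stage flattening by non-commutative Balog--Szemer\'edi--Gowers plus Helfgott's product theorem, and late-stage quasi-randomness via the Frobenius lower bound $(p-1)/2$ on nontrivial irreducible dimensions. One small arithmetic slip: since each flattening step gains a fixed polynomial factor $p^{-c_1}$ in $\|\mu^{(\cdot)}\|_{\ell^2}^2$ while the total decay required (from $p^{-\alpha}$ to $|G_p|^{-1+o(1)}\approx p^{-3}$) is only polynomial in $p$, you need only $O(1/c_1)=O(1)$ iterations, not $O(\log p/c_1)$ -- which is precisely what makes your stated $\ell_1 = O(\log p)$ come out right.
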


We shall only say a very few words about the proof. It is well-known (see \cite{hlw}) that proving expansion is equivalent to showing mixing in time $\sim C\log |G|$ of the random walk on $S$, or in other words showing that the convolution power $\mu_S^{n}$ is highly uniform for $n \sim C \log |G|$. Here, $\mu_S := \frac{1}{2k} \sum_{i=1}^k (\delta_{g_i} + \delta_{g_i^{-1}})$. This is analysed in three stages: the \emph{early stage}, where $n \leq c \log |G|$, the \emph{middle stage} where $c \log |G| \leq n \leq \frac{1}{10}C \log |G|$, and the \emph{late stage} where $\frac{1}{10} C \log |G| \leq n \leq C \log |G|$. During the early stage the walk behaves in a very tree-like manner, and in particular at the end of that stage it has already visited a reasonable fraction of $G$. The input from the theory of sets with small doubling/approximate groups comes during the middle stage: here, one must show that the walk does not get ``stuck'', and that by time $\frac{1}{10} C \log |G|$ it has filled out a large portion of $G$. The crucial point is to show, in a certain sense, that  it is impossible to have $\mu^{(n)} \approx \mu^{(2n)}$. If this \emph{did} happen then the support of $\mu^{(n)}$ would behave very much like a set with doubling $\approx 1$, a scenario that can be ruled out using Helfgott's classification of such sets. Finally, a little representation theory is used in the analysis of the \emph{late stage}, specifically the fact that  $\SL_2(\F_p)$ is \emph{quasirandom} in the sense of Gowers \cite{gowers-quasirandom}, that is to say has no nontrivial irreducible representations of small dimension. This observation was first employed in a related context by Sarnak and Xue \cite{sarnak-xue}.

There are many open problems connected with expanders, and we refer the reader to the literature cited above. Let us just mention one (well-known) question which we hope Paul Erd\H{o}s (who wrote several foundational papers in probabilistic group theory) would have liked.

\begin{problem}
Suppose that $k$ elements $g_1, g_2, \dots, g_k$ are selected at random from the alternating group $A_n$, and set $S := \{g_1^{\pm 1}, \dots ,g_k^{\pm 1}\}$. Is it true that, almost surely as $n \rightarrow \infty$, $S$ gives an expander with expansion constant $\eps = \eps(k) > 0$?
\end{problem}

It could be the case that this is so even for $k = 2$. By a result of Dixon \cite{dixon}, $g_1$ and $g_2$ do almost surely generate $A_n$ as $n \rightarrow \infty$, certainly a prerequisite for expansion. By a tour de force result of Kassabov \cite{kassabov}, there does exist $k = O(1)$ and generators $g_1,\dots, g_k$ of $A_n$ for which $S$ gives an expander with $\eps = \eps(k) > 0$, but these are not random generators. Finally, we note that in the case of the alternating group none of the three parts of the Bourgain-Gamburd argument goes through in their current form. In particular, classification of sets with small doubling in $A_n$ (which would be needed with good quantitative bounds) is just as hard as the classification of sets with small doubling in general.\vspace{11pt}

\emph{Gromov's theorem and Varopoulos's result.} There is a close link between sets with small doubling and Gromov's theorem on groups of polynomial growth. Suppose that a group $G$ is generated by a finite symmetric set $S$ (thus $S = S^{-1}$). We say that $G$ has  \emph{polynomial growth} if there are $C$ and $d$ such that $|S^n| \leq Cn^d$ for all $n$. Gromov \cite{gromov} proved that a group has polynomial growth if and only if it is virtually nilpotent, that is to say if and only if some finite index subgroup of it is nilpotent. The link between Gromov's result and sets with small doubling is that infinitely many of the ``balls'' $S^n$ will have $\sigma[S^n] < 2^d + 1$. This is very easy to see: if not, then by induction we have $|S^{2^k}| \geq c(2^d + 1)^{k}$, which is a contradiction for large $k$. By elaborating slightly on this idea, one may fairly easily show that the general theorem for sets with small doubling, Theorem \ref{bgt-theorem}, implies Gromov's theorem. Conversely, large parts of the proof of Theorem \ref{bgt-theorem} are motivated by Gromov's argument, in particular the use of ultrafilters to construct a locally compact group (which closely parallels the Wilkie and van der Dries \cite{wilkie-vandendries} construction of the \emph{asymptotic cone} of a finitely-generated group).

Theorem \ref{bgt-theorem} allows for some strengthenings of Gromov's theorem. For example (\cite[Theorem 1.13]{bgt-big}) one need only assume that $|S^n| \leq Cn^d$ for \emph{one} value of $n > n_0(C,d)$.  Several other such results are given in Section 11 of \cite{bgt-big}, where some applications to differential geometry are also discussed. A possibility, not yet realised, is that a proper understanding of sets with small doubling could be used to study groups of polynomial growth from a quantitative viewpoint. In particular the following conjecture of Grigorchuk \cite{grigorchukgap,grigorchukICM} remains wide open.

\begin{problem}
Is there some constant $c$ (perhaps even $c = \frac{1}{2}$) such that the following is true: if $G$ is generated by a symmetric set $S$, and if $|S^n| \leq e^{n^c}$ for all large $n$, is it true that $G$ has polynomial growth?
\end{problem}

Famous examples of Grigorchuk \cite{grigorchuk} show that this is not true for all $c < 1$. So far, the best result known is due to Shalom and Tao \cite{shalom-tao}, who show that if $|S^n| \leq n^{(\log \log n)^c}$ for large $n$ then $G$ has polynomial growth. (This result does not, however, make use of the connection with approximate groups.) See \cite{grigorchukgap} for the state of the art on the above problem regarding special classes of groups. \vspace{11pt}

One lovely application of Gromov's theorem is the following result of Varopoulos \cite{varopoulos} (see also \cite{woess}).

\begin{theorem}
Let $G$ be a group generated by a finite set $S$. Suppose that the \textup{(}simple\textup{)} random walk with generating set $S$ is recurrent. Then $G$ is finite or has a finite-index subgroup isomorphic to $\Z$ or $\Z^2$.
\end{theorem}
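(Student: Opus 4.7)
My plan is to combine an analytic input (recurrence forces at-most-quadratic growth) with Gromov's theorem and a short computation in nilpotent groups. Let $\mu = \frac{1}{|S|}\sum_{s\in S}\delta_s$ be the step distribution and write $p_n(e,e) = \mu^{(n)}(e)$, so recurrence is the statement $\sum_n p_n(e,e) = \infty$.

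For the analytic step I would appeal to the classical Nash / Carne--Varopoulos / Coulhon--Saloff-Coste circle of ideas. The trivial Cauchy--Schwarz lower bound $p_{2n}(e,e)=\|\mu^{(n)}\|_2^2\geq 1/|S^n|$ runs in the wrong direction, so one instead uses the matching upper bound (due to Varopoulos, coming from the isoperimetric profile of the Cayley graph) of the shape
\[
p_{2n}(e,e)\;\leq\; C/F(cn),
\]
where $F$ is essentially the inverse of the growth function $n\mapsto |S^n|$. When $|S^n|\gtrsim n^d$ for some $d\geq 3$ this gives $p_{2n}(e,e)\leq C n^{-d/2}$, a summable sequence, forcing transience; superpolynomial growth yields even faster decay. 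Contrapositively, recurrence forces $|S^n|\leq Cn^2$ for all $n$.

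With polynomial growth of degree at most $2$ in hand, I invoke Gromov's theorem, now a consequence of Theorem \ref{bgt-theorem} as explained in \S\ref{sec6}: the iterated doubling argument recalled there produces infinitely many $n$ with $\sigma[S^n]<5$, and the structure theorem applied to such an $S^n$ shows that $G$ is virtually nilpotent. Fix a nilpotent finite-index subgroup $N\leq G$. The Bass--Guivarc'h formula computes the polynomial growth degree of $N$ as
\[
d(N)\;=\;\sum_{i\geq 1} i\cdot \operatorname{rank}\bigl(\gamma_i N/\gamma_{i+1} N\bigr),
\]
where $\gamma_i$ denotes the lower central series and $\operatorname{rank}$ the torsion-free rank. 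Since this coincides with the growth degree of $G$ and is at most $2$, the only possibilities are: every rank vanishes, so $N$ (and hence $G$) is finite; the abelianisation has rank $1$ and all deeper quotients are trivial, so $N$ is virtually $\Z$; or the abelianisation has rank $2$ and all deeper quotients are trivial, so $N$ is virtually $\Z^2$. (The alternative $r_1=0$, $r_2=1$ is excluded because finite abelianisation of a finitely generated nilpotent group forces the group itself to be finite.) In each case $G$ has the required structure.

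The hard part is the analytic step: the upper bound on return probabilities in terms of growth is the only non-algebraic ingredient, and it is precisely this heat-kernel / isoperimetric input (going back to Varopoulos himself \cite{varopoulos}) that is not supplied by the small-doubling machinery. Once that inequality is in hand, the remainder is a routine combination of Gromov's theorem (itself now a corollary of Theorem \ref{bgt-theorem}) with the explicit Bass--Guivarc'h formula for nilpotent groups.
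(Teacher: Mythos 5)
The paper does not actually prove this theorem: being a survey, it states the result with citations to \cite{varopoulos} and \cite{woess}, and then only remarks that the proof uses Gromov's theorem ``for some value of $d>2$''. Your outline is exactly the classical argument from those sources and is fully consistent with that remark: recurrence, via the Nash/isoperimetric/heat-kernel estimates, excludes growth of any degree $\geq 3$ (and a fortiori superpolynomial growth); Gromov's theorem, applied only to a group already known to have low polynomial growth, gives virtual nilpotence; and the Bass--Guivarc'h formula, together with the integrality of the resulting growth exponent, forces $d\in\{0,1,2\}$, from which the stated structure follows. So the proposal is correct and essentially the route the paper is gesturing at.

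One small point of exposition. The assertion ``recurrence forces $|S^n|\leq Cn^2$ for all $n$'' is really the end product of the whole chain rather than a direct contrapositive of the Coulhon--Saloff-Coste estimate. What the analytic bound gives directly is that no estimate $|S^n|\gtrsim n^{D}$ with $D\geq 3$ can hold (and that superpolynomial growth is impossible); one then needs Gromov's theorem and Bass--Guivarc'h to know the growth exponent is an integer $d$, and only then does $d\leq 2$, hence $|S^n|\asymp n^{d}\leq Cn^{2}$, follow. Your subsequent invocation of Bass--Guivarc'h closes this loop, so there is no genuine gap; but the intermediate claim would be cleaner if reordered as: recurrence $\Rightarrow$ polynomial growth $\Rightarrow$ virtually nilpotent $\Rightarrow$ integer degree $d$ $\Rightarrow$ $d\leq 2$ $\Rightarrow$ classification.
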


Although Varopoulos's theorem uses Gromov's theorem, it actually only uses that theorem for some value of $d > 2$. It seems, however, that no simpler proof of the theorem is known in that special case. From the point of view of sets with small doubling, one is interested in statements about sets $A$ with $\sigma[A] < 4 + \eps$. However, no analysis of this case is known which is simpler than the general analysis of \cite{bgt-big}.\vspace{11pt}

\emph{An open problem.} We conclude with a very simply-stated open question. We said very little about the proof of the classification of approximate groups in general \cite{bgt-big}. An important ingredient in it (used to establish the correspondence between approximate groups and locally compact groups) was the following result of Croot-Sisask \cite{croot-sisask} and Sanders \cite{sanders}.

\begin{theorem}
Suppose that $A$ is a $K$-approximate group. Then there is a set $S$, $|S| \gg_K |A|$, such that $S^8 \subseteq A^4$.
\end{theorem}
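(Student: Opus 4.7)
The plan is to deduce this entirely from the Croot--Sisask almost-periodicity lemma, in its non-abelian form (established by Sanders and Tao). Set $g := 1_A * 1_A$: this is a non-negative function with support exactly $A^2$ and with $\|g\|_1 = |A|^2$. The relevant consequence of Croot--Sisask is: for every $\eps \in (0,1)$ there exists a set $T \subseteq G$ of size $|T| \gg_{K,\eps} |A|$ such that
\[ \|L_t g - g\|_2 \leq \eps \|g\|_2 \qquad \text{for every } t \in T, \]
where $L_t h(x) := h(tx)$ denotes left translation.

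Taking $\eps$ to be any absolute constant less than $\sqrt{2}/8$ (say $\eps = 1/10$) produces such a $T$ with $|T| \gg_K |A|$. The critical manoeuvre is to telescope $L_{t_1 \cdots t_8}g - g$ by peeling off one factor at a time \emph{from the outside}:
\[ L_{t_1 \cdots t_8} g - g \;=\; \sum_{i=1}^{8} \bigl(L_{t_i \cdots t_8}g - L_{t_{i+1} \cdots t_8}g\bigr), \]
where the empty product equals $1$. The substitution $y := t_{i+1} \cdots t_8 x$ in the $i$-th summand yields
\[ \|L_{t_i \cdots t_8}g - L_{t_{i+1} \cdots t_8}g\|_2^2 \;=\; \sum_y |g(t_i y) - g(y)|^2 \;=\; \|L_{t_i}g - g\|_2^2 \;\leq\; \eps^2 \|g\|_2^2. \]
This ordering is essential: the symmetric ``peel from the inside'' scheme would instead produce conjugates of the $t_i$, for which the Croot--Sisask estimate gives no information. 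The triangle inequality then delivers $\|L_{t_1 \cdots t_8}g - g\|_2 \leq 8\eps \|g\|_2 < \sqrt{2}\,\|g\|_2$.

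Squaring and expanding via $\|g - L_{t_1 \cdots t_8}g\|_2^2 = 2\|g\|_2^2 - 2\langle g, L_{t_1 \cdots t_8}g\rangle$ (both functions have identical $L^2$-norms, by translation invariance) forces $\langle g, L_{t_1 \cdots t_8}g\rangle > 0$. Since $g$ is non-negative with support $A^2$, and $L_{t_1 \cdots t_8}g$ is supported on $(t_1 \cdots t_8)^{-1}A^2$, this positivity means $A^2 \cap (t_1 \cdots t_8)^{-1}A^2 \neq \emptyset$. Rearranging, $t_1 \cdots t_8 \in A^2 \cdot A^{-2}$; since $A^{-1} = A$, this equals $A^4$. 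Hence $T^8 \subseteq A^4$, and $S := T$ finishes the proof.

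The main obstacle is the Croot--Sisask lemma itself; the deduction above is essentially formal. Its proof is probabilistic: one samples $k$-tuples $(a_1, \dots, a_k) \in A^k$ uniformly and shows, via a Rosenthal-type moment inequality, that the empirical average $\frac{1}{k}\sum_i L_{a_i^{-1}} 1_A$ concentrates in $L^p$ around its mean $\frac{1}{|A|} \cdot 1_A * 1_A$. A pigeonhole on $A^k$ then isolates a class of density $\gg_{K,\eps} 1$ on which these empirical averages are mutually $L^p$-close, and $T$ is extracted from appropriate differences of tuples within a single class. The non-abelian adaptation requires some care over left-versus-right ordering but introduces no essentially new idea.
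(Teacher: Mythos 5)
The paper states this theorem without proof, attributing it to Croot--Sisask \cite{croot-sisask} and Sanders \cite{sanders}, so there is no in-paper argument to compare against. Your deduction is correct and is, as far as I know, essentially the standard one used in those references and in the correspondence-principle step of \cite{bgt-big}. The outline of the argument is sound: for $g = 1_A * 1_A$ (which is supported on $A^2$ with $\|g\|_1 = |A|^2$ since $A$ is symmetric), the telescoping identity
\[
L_{t_1\cdots t_8}g - g = \sum_{i=1}^{8}\bigl(L_{t_i\cdots t_8}g - L_{t_{i+1}\cdots t_8}g\bigr)
\]
together with the change of variables $y = t_{i+1}\cdots t_8\,x$ reduces each summand to $\|L_{t_i}g - g\|_2$, and your remark that the opposite ``inside-out'' peeling would instead force one to estimate $\|L_{wt_iw^{-1}}g - g\|_2$ for uncontrolled conjugates is precisely the non-commutative subtlety that must be respected. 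Choosing $\eps < \sqrt{2}/8$ and expanding $\|g - L_{t_1\cdots t_8}g\|_2^2 = 2\|g\|_2^2 - 2\langle g, L_{t_1\cdots t_8}g\rangle$ then gives $\langle g, L_{t_1\cdots t_8}g\rangle > 0$, hence $A^2 \cap (t_1\cdots t_8)^{-1}A^2 \ne \emptyset$ and $t_1\cdots t_8 \in A^2(A^2)^{-1} = A^4$, so $S := T$ works. The one caveat is that nearly all of the analytic work is delegated to the non-abelian Croot--Sisask lemma, which you describe only in outline; in particular, the $L^2$ normalisation $\|L_tg - g\|_2 \leq \eps\|g\|_2$ with $|T| \gg_{K,\eps}|A|$ requires inserting the lower bound $\|g\|_2^2 \geq \|g\|_1^2/|A^2| \geq |A|^3/K$, so the $K$-dependence enters both there and in the pigeonhole, but this is harmless since only a $\gg_K$ bound is claimed. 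Since you correctly flag Croot--Sisask as the real obstacle and get the formal deduction right, I regard the proposal as a faithful reproduction of the known proof.
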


\begin{problem}
In the preceding theorem, can we take $|S| \gg K^{-O(1)} |A|$?
\end{problem}

This is open even in the abelian case.\vspace{11pt}

\noindent\textsc{Acknowledgments.} EB is supported in part by the ERC starting grant 208091-GADA. BG is supported by an ERC starting grant. TT is supported by a grant from the MacArthur Foundation, by NSF grant DMS-0649473, and by the NSF Waterman award.

\bibliographystyle{abbrv}
\bibliography{bibfile}

\end{document}